\documentclass[a4paper]{article}
\pdfoutput=1
\synctex=1

\usepackage{arxiv}

\usepackage[T1]{fontenc}
\usepackage[ascii]{inputenc}
\usepackage[english]{babel}
\usepackage{tikz-cd}

\usepackage{pdfsync}

\usepackage{cite}
\usepackage{nameref,hyperref,url}

\usepackage{amsmath,amsfonts,amsthm,amssymb}
\usepackage{todonotes}

\usepackage{booktabs}       %
\usepackage{nicefrac}       %
\usepackage{microtype}      %
\usepackage{doi}

\allowdisplaybreaks[2]

\newcommand{\R}{\mathbb{R}}
\newcommand{\N}{\mathbb{N}}
\newcommand{\Rn}{\R^n}
\newcommand{\Sn}{\mathbb{S}^{n-1}}
\renewcommand{\O}{\Omega}

\newcommand{\Od}{\O_{\delta}}
\newcommand{\Wd}{\omega_{\delta}}
\newcommand{\dx}{\,\mathrm{d}x}

\newcommand{\grad}{\mathcal{G}_{\delta}}

\newcommand{\diver}{\mathcal{D}_{\delta}}

\newcommand{\U}{\mathbb{U}}
\newcommand{\Uz}{\U_0}

\newcommand{\Udz}{\U_{\delta,0}}

\newcommand{\Q}{\mathbb{Q}}
\newcommand{\Qd}{\mathbb{Q}_{\delta}}
\newcommand{\Qds}{\Qd^{\text{a}}}

\makeatletter
\newcommand{\thickbar}{\mathpalette\@thickbar}
\newcommand{\@thickbar}[2]{{#1\mkern1.5mu\vbox{
  \sbox\z@{\(#1\mkern-1.5mu#2\mkern-1.5mu\)}%
  \sbox\tw@{\(#1\overline{#2}\)}%
  \dimen@=\dimexpr\ht\tw@-\ht\z@-.8\p@\relax
  \hrule\@height.8\p@ %
  \vskip\dimen@
  \box\z@}\mkern1.5mu}
}
\makeatother

\newcommand{\knl}{{\ddot{\kappa}}}
\newcommand{\qnl}{{\ddot{q}}}
\newcommand{\pnl}{{\ddot{p}}}
\newcommand{\rnl}{{\ddot{r}}}

\newcommand{\Id}{{\hat{I}}}
\newcommand{\Ip}{{\check{I}}}

\newcommand{\Idloc}{\Id_{\text{loc}}}
\newcommand{\Iploc}{\Ip_{\text{loc}}}

\newcommand{\ad}{{\hat{a}}}
\newcommand{\ap}{{\check{a}}}

\newcommand{\kadm}{\mathbb{A}}
\newcommand{\kadmd}{\kadm_{\delta}}

\newcommand{\wsto}{\overset{*}{\rightharpoonup}}
\newcommand{\wto}{\rightharpoonup}

\newtheorem{theorem}{Theorem}[section]

\newtheorem{proposition}[theorem]{Proposition}

\theoremstyle{remark}
\newtheorem{remark}[theorem]{Remark}
\theoremstyle{definition}

\DeclareMathOperator{\graph}{graph}
\DeclareMathOperator{\Div}{div}

\DeclareMathOperator*{\argmin}{arg\,min}

\makeatletter
\makeatother

\allowdisplaybreaks
\sloppy

\title{The nonlocal Kelvin principle and the dual approach to nonlocal control in the conduction coefficients}

\author{\href{https://orcid.org/0000-0002-3987-7745}{\includegraphics[scale=0.06]{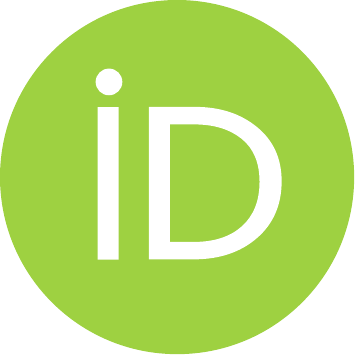}\hspace{1mm}Anton Evgrafov}\\
Department of Mathematical Sciences\\
Aalborg University\\
DK--9210 Aalborg, Denmark\\
\texttt{anev@math.aau.dk}\\
\And
\href{https://orcid.org/0000-0001-5750-1349}{\includegraphics[scale=0.06]{orcid.pdf}\hspace{1mm}Jos\'e C. Bellido}\\
Department of Mathematics\\
University of Castilla-La Mancha\\
13071--Ciudad Real, Spain\\
\texttt{JoseCarlos.Bellido@uclm.es}\\
}

\hypersetup{
pdftitle={The nonlocal Kelvin principle and the dual approach to nonlocal control in the conduction coefficients},
pdfauthor={Anton Evgrafov, Jose C. Bellido},
pdfkeywords={Optimal design, nonlocal diffusion, mixed variational principles},
pdfsubject={MSC2010 classification: 49J21, 49J45, 49J35, 80M50},
}

\begin{document}
\maketitle

\begin{abstract} 
We explore the dual approach to nonlocal optimal design, specifically for a classical min-max problem which in this study is associated with a nonlocal scalar diffusion equation.  We reformulate the optimal design problem utilizing a dual variational principle, which is expressed in terms of nonlocal two-point fluxes.  We introduce the proper functional space framework to deal with this formulation, and establish its well-posedness. The key ingredient is the inf-sup (Ladyzhenskaya--Babu{\v{s}}ka--Brezzi) condition, which holds uniformly with respect to small nonlocal horizons.  As a byproduct of this, we are able to prove convergence of nonlocal to local optimal design problems in a straightforward fashion.
\end{abstract}

\keywords{Optimal design \and nonlocal diffusion \and mixed variational principles}

\noindent  \paragraph*{MSC2010 classification:} 49J21, 49J45, 49J35, 80M50

\section{Introduction}\label{sec:intro}

Nolocal models emerge naturally when long-range interactions between points in space and/or in time are considered. Such interactions constitute a shared and important feature in a wide variety of contexts, such as image processing \cite{GiOs08}, pattern formation \cite{Fife03}, population dispersal \cite{CoCoElMa07}, nonlocal diffusion \cite{AnMaRoTo10,BuVal}, nonlocal characterization of Sobolev spaces \cite{bourgain2001another,ponce2004estimate,ponce2004new} and fractional Laplacian \cite{fractional_laplacian} and applications of it, to name just a few. Of particular interest for us is peridynamics, a recent nonlocal paradigm in solid mechanics, see for example \cite{SiLe10, madenci_oterkus}. Our motivation stems from our interest in optimal design of systems governed by nonlocal state laws, which has been previously considered in for example \cite{andres2015nonlocal,evgrafov2019non} in the same context as in the present work, but we also mention applications of nonlocality in shape optimization \cite{BoRiSa}, optimal control \cite{DeGu2014} and inverse problems \cite{DeGu16}.

Optimal design problems have attracted attention of applied mathematicians since the pioneering works on the subject in the sixties and seventies of the last century. We will view optimal design as determining the best way of distributing two different materials, for example conducting or elastic, in desired proportions within a prescribed design domain in order to minimize a given cost functional. Typically, such optimization problems lack optimal solutions, a mathematical pathology which reflects the physical nature of the problem.  Indeed, optimal distributions of given constituents are often microstructures or composites, which require more sophisticated mathematical models in order to be properly described. There have been essentially two distinct ways of dealing with optimal design problems, each leading to different numerical and computational procedures which are nowadays widely utilized in many industries, most notably automotive and aerospace. The first one is based on the homogenization theory for partial differential equations (PDEs) and relaxation of optimal design problems, as the set of admissible material layouts is enlarged to include the composite materials that can obtained as mixtures of the original constituents. The interest reader is referred to the monographs \cite{cherkaev2012variational,allaire2012shape} for a great account of the homogenization method. The second approach to optimal design is based on penalization methods, where the set of admissible distributions is restricted and at the same time, regularization techniques are introduced into the problem. The most successful technique within this class, which is nowadays integrated into many extensively used tools in engineering design,  is the so-called solid isotropic material with penalization (SIMP) model \cite{bendsoe2013topology}.  Interestingly enough, certain regularizing techniques which are utilized within this approach introduce a ``hidden nonlocality'' into the model, as it was pointed out in \cite{sigmund_maute} in the context of SIMP, and subsequently rigorously established in the framework of nonlocal optimal design in conductivity in \cite{evgrafov2019sensitivity}.

Optimal design problems for nonlocal models has been introduced  and treated in \cite{andres2015nonlocal,andres2015type,andres2017convergence,AnMuRo19,AnMuRo21}, and also by the authors in \cite{evgrafov2019non}. These references are pioneering works on the subject of nonlocal optimal design, for which development is still in its infancy. The motivation of considering optimal design under nonlocal laws is twofold. On the one hand, the resulting nonlocal design problems  are generally well posed, and no external artificial regularization is needed.  On the other hand, such problems converge in a certain sense to the corresponding local optimal design problems, or to a regularized or relaxed version of them, as the nonlocality parameter, usually referred to as the {\it horizon}, is driven to zero. Therefore the study of nonlocal optimal design problems is interesting and attractive as approximate optimal solutions to ill-posed local problems can be found as optimal solutions to  ({\it well-posed}) nonlocal problems. Additionally, nonlocal modelling has shown to be very effective in the context of solid mechanics by allowing to effortlessly model certain phenomena, such as fracture and crack propagation, cavitation, and others (see for instance the account \cite{du} and the references therein), which in the context of traditional, PDE-based local mechanical modelling require rather special treatment. %

Dual approach in local optimal design is now a classical chapter of the theory and has resulted in several important theoretical results. The most celebrated example is perhaps associated with utilization of the principle of minimal complementary energy for compliance optimization in elasticity, which allowed to identify and characterize optimal composites in terms of sequential laminates of the constituents materials \cite{allaire_kohn,allaire2012shape}. In this work, we introduce the dual approach to nonlocal optimal design, specifically in the context of the energy minimization problem for a scalar diffusion equation.  This model arises for example when one considers finding an optimal cross-section of an elastic rod in torsion, an optimal thickness of a thin elastic membrane, or most naturally an optimal distribution of electrically or thermally conductive materials.  We reformulate the optimal design problem through a dual variational principle, which can be viewed as a nonlocal version of Kelvin principle.  We introduce the appropriate functional analytic framework to study this princple, and establish the key result for mixed variational statements: the inf-sup (Ladyzhenskaya--Babu{\v{s}}ka--Brezzi, LBB) condition, from which the well-posedness of the variational principle follows in the standard fashion. The interesting part is that the inf-sup condition is uniform with respect to small nonlocal horizons, and allows us to establish convergence of nonlocal problems towards their local limit in a simple manner. From this perspective, this work complements the studies \cite{andres2015nonlocal,andres2015type,andres2017convergence,evgrafov2019non}, where the direct approach to the same optimal design problem was considered.  We believe that the dual approach is not only interesting on its own merits, but could also be the enabling ingredient for establishing novel results within nonlocal optimal design in the future, as has been the case within the field of local optimal design.  For example, extending the results presented in the current work to the nonlinear case of \(p\)-Laplacian appears to be relatively straightforward.

The outline of the paper is as follows. In section 2 primal and dual problems for a model local optimal design problem for distributing conductive materials are introduced. In section 3 we recall a nonlocal version of the primal optimal design problem. Section 4--6 present the main contributions of this work. In section 4 a nonlocal Kelvin principle is introduced,  the appropriate functional space framework to accommodate it is considered, and finally a nonlocal dual version of the optimal design problem is stated and is shown to be well-posed. In sections 5 and 6 we address the nonlocal-to-local (and vice versa) flux recovering as the nonlocal interaction horizon is driven to zero. As a consequence, nonlocal-to-local convergence of the dual problems, and therefore also of the primal ones, is established.

\section{Primal and dual local optimal design problems}\label{sec:local}

We start by recalling a classical problem, which can be viewed as that of 
distributing spatially varying heat conductive
material in a given design domain in order to minimize the weighted average
steady-state temperature, see~\cite{cea1970example}.
For simplicity, we limit ourselves to the situation described by a Dirichlet
boundary value problem with homogeneous boundary conditions.
Indeed, let \(\O\) be a connected bounded open Lipschitz domain in \(\R^n\), \(n \geq 2\).
Throughout the manuscript we will use \(f:\O\to\R\) to denote the volumetric heat source
in the system, and unless specifically stated otherwise we will assume \(f\in L^2(\O)\).
For given constants \(0<\underline{\kappa} < \overline{\kappa}<\infty\) and
\(\gamma \in (\underline{\kappa},\overline{\kappa})\) we define the sets of
admissible conductivites \(\kappa\), temperatures \(u\), and heat fluxes \(q=-\kappa \nabla u\)
by
\begin{equation}
  \begin{aligned}
    \kadm &= \bigg\{\, \kappa \in L^\infty(\O)
    \mid \kappa(x) \in [\underline{\kappa},\overline{\kappa}], \text{\ for almost all \(x\in\O\)},
    \int_{\O} \kappa(x)\dx \leq \gamma|\O|
    \,\bigg\},\\
    \Uz &= H^1_0(\O), \qquad
    \Q = H(\Div,\O;\Rn),\qquad\text{and}\qquad
    \Q(f)= \{\, q \in \Q \mid \Div q=f \,\}.
    \end{aligned}
\end{equation}
We note that here and throughout the manuscrupt we use \(|\cdot|\) loosly to denote the
``size'' of objects, such as Lebesgue measure of sets and Eucledian norms of vectors,
with no ambiguity arising from the context.
Recall that the steady state temperature of the solid for a given conductivity
distribution \(\kappa \in \kadm\) can be determined using the Dirichlet principle,
namely
\begin{equation}\label{eq:Dirloc}
  \begin{aligned}
    u &= \argmin_{v\in \Uz} \Iploc(\kappa;v), &\quad&\text{where}\\
    \Iploc(\kappa;v) &= \frac{1}{2}\int_{\O} \kappa(x)|\nabla v(x)|^2\dx - \ell(v), &\quad&\text{and}\\
    \ell(v) &= \int_{\O} f(x) v(x)\dx.
  \end{aligned}
\end{equation}
The associated optimal design problem, see~\cite{cea1970example}, amounts to
finding a distribution of the conductivities, which maximizes \(\Iploc\).
Consequently, this problem can be stated as the following saddle-point problem:
\begin{equation}\label{eq:compliance_local_primal}
  \begin{aligned}
    p^*=\max_{\kappa \in\kadm} \min_{u\in \Uz} \Iploc(\kappa;u).
  \end{aligned}
\end{equation}
As a direct consequence of the concavity and upper semicontinuity of the map
\(\kappa\mapsto \min_{u\in \Uz} \Iploc(\kappa;u)\) is is not difficult to check that~\eqref{eq:compliance_local_primal}
admits an optimal solution.
In a very real sense this problem is a prototype for a wide class of
\emph{topology optimization} problems with applications in all branches of engineering
sciences, see for example~\cite{bendsoe2013topology,cherkaev2012variational,allaire2012shape}.

We now replace Dirichlet's variational principle (the principle of minimal potential energy) with that of Kelvin (the principle of the minimal complementary energy) for describing the steady state of the heated solid.
This variational principle is formulated in terms of the heat flux
\(q \in \Q(f)\) as the primary unknown:
\begin{equation}\label{eq:Kelloc}
  \begin{aligned}
    &q = \argmin_{p\in\Q(f)} \Idloc(\kappa;p),\quad\text{where}\\
    &\Idloc(\kappa;p) = \frac{1}{2}\int_{\O} \kappa^{-1}(x)|p(x)|^2\dx.
  \end{aligned}
\end{equation}
It is not difficult to check that the optimal values for the
problems~\eqref{eq:Dirloc} and~\eqref{eq:Kelloc} are related through the equality \(\Idloc(\kappa;q)+\Iploc(\kappa;u)=0\).
Therefore, the saddle point problem~\eqref{eq:compliance_local_primal}
can be replaced with a single minimization problem with respect to both the
control coefficient \(\kappa\) and the flux variable \(q\):
\begin{equation}\label{eq:compliance_local_dual}
    d^*=\min_{(\kappa,q)\in \kadm\times\Q(f)} \Idloc(\kappa;q).
\end{equation}
Not surprisingly, in view of its equivalence with~\eqref{eq:compliance_local_primal},
the problem~\eqref{eq:compliance_local_dual} also attains its minimum,
which can also be viewed as a direct consequence of the convexity of the map
\(\R_+\times \Rn\ni(\kappa,q)\mapsto \kappa^{-1}|q|^2\),
see~\cite{bendsoe2013topology,cherkaev2012variational,allaire2012shape}.
For future reference, we state the proof of this fact.
\begin{proposition}\label{prop:dual_loc_exist}
The problem~\eqref{eq:compliance_local_dual} admits an optimal solution.
\end{proposition}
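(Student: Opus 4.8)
The plan is to prove existence by the direct method of the calculus of variations, the only delicate point being the weak lower semicontinuity of \(\Idloc\) along a product of two merely weakly convergent sequences.

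First I would fix a minimizing sequence \((\kappa_k,q_k)_k\subset\kadm\times\Q(f)\), \(\Idloc(\kappa_k;q_k)\to d^*\), and extract weak limits. Since \(\underline{\kappa}\le\kappa_k\le\overline{\kappa}\) almost everywhere, \((\kappa_k)_k\) is bounded in \(L^\infty(\O)\), so along a subsequence (not relabelled) \(\kappa_k\wsto\kappa\) in \(L^\infty(\O)\). The two conditions defining \(\kadm\), the pointwise bounds and \(\int_\O\kappa\dx\le\gamma|\O|\), describe a convex set that is closed under this convergence, hence \(\kappa\in\kadm\); in particular \(\kappa\ge\underline{\kappa}>0\) almost everywhere. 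For the fluxes, the coercivity estimate \(\Idloc(\kappa_k;q_k)\ge(2\overline{\kappa})^{-1}\|q_k\|_{L^2(\O;\Rn)}^2\), together with the fact that \(\Div q_k=f\) is fixed, shows that \((q_k)_k\) is bounded in \(H(\Div,\O;\Rn)\); passing to a further subsequence, \(q_k\wto q\) in \(H(\Div,\O;\Rn)\), so in particular \(q_k\wto q\) in \(L^2(\O;\Rn)\) and \(\Div q_k\wto\Div q\) in \(L^2(\O)\), whence \(\Div q=f\) and \(q\in\Q(f)\).

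Next I would pass to the limit in the objective. The integrand \((\kappa,p)\mapsto\kappa^{-1}|p|^2\) is the perspective of the convex function \(p\mapsto|p|^2\), hence jointly convex and continuous on \((0,\infty)\times\Rn\); extended by \(+\infty\) when \(\kappa\le0\) it stays convex, and by the standard lower semicontinuity theory for convex integral functionals the functional \((\kappa,p)\mapsto\int_\O\kappa^{-1}|p|^2\dx\) is sequentially weakly lower semicontinuous on \(L^2(\O)\times L^2(\O;\Rn)\). Since \(\O\) is bounded, \(\kappa_k\wsto\kappa\) in \(L^\infty(\O)\) implies \(\kappa_k\wto\kappa\) in \(L^2(\O)\), and we already have \(q_k\wto q\) in \(L^2(\O;\Rn)\); therefore \(\Idloc(\kappa;q)\le\liminf_{k\to\infty}\Idloc(\kappa_k;q_k)=d^*\). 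As \((\kappa,q)\in\kadm\times\Q(f)\), this pair is a minimizer, which proves the proposition.

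The step I expect to be the main obstacle is precisely the weak lower semicontinuity just invoked: it cannot be reduced to continuity of the integrand because both \(\kappa_k\) and \(q_k\) converge only weakly, and it genuinely rests on the joint convexity coming from the perspective structure. A hands-on alternative is the variational representation \(\kappa^{-1}|p|^2=\sup_{b\in\Rn}\{2\,b\cdot p-\kappa|b|^2\}\): each term \(2\,b\cdot p-\kappa|b|^2\) is affine in \((\kappa,p)\) and, integrated against a fixed \(b\in L^2(\O;\Rn)\), weakly continuous, so the supremum over \(b\) is weakly lower semicontinuous. The remaining ingredients — boundedness of the minimizing sequence and closedness of the constraint sets — are routine. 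I also note that \(\kappa\) could alternatively be eliminated by solving \(\min_{\kappa\in\kadm}\Idloc(\kappa;q)\) explicitly, leaving a convex coercive problem in \(q\) over \(\Q(f)\), but the joint argument above is shorter and closer to what will be needed in the nonlocal setting.
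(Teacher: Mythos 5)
Your proof is correct and proceeds along the same broad outline as the paper's (direct method, weak\(^*\)/weak compactness, closedness of \(\kadm\times\Q(f)\)), but the mechanism you use to pass to the limit in the objective is genuinely different. The paper establishes weak lower semicontinuity of \(\Idloc\) by applying Mazur's lemma to the sequence \((\kappa_k,q_k)\) (extracting convex combinations that converge strongly in \(L^p(\O)\times\Q\)), passing to a further subsequence converging almost everywhere, and then invoking Fatou's lemma together with Jensen's inequality for the jointly convex integrand. You instead observe that \((\kappa,p)\mapsto\kappa^{-1}|p|^2\) is the perspective of \(p\mapsto|p|^2\), hence jointly convex, and — more usefully as a rigorous device — write it as \(\sup_{b}\{2\,b\cdot p-\kappa|b|^2\}\), so that \(\Idloc\) becomes a supremum over \(b\in L^2(\O;\Rn)\) of functionals that are affine and continuous in \((\kappa,q)\) under weak\(^*\)-\(L^\infty\) and weak-\(L^2\) convergence; lower semicontinuity of the supremum is then immediate, and the supremum is attained at \(b=q/\kappa\in L^2\) because \(\kappa\ge\underline{\kappa}>0\). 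Your variational (conjugate-function) route is cleaner and avoids the double extraction of subsequences, at the mild cost of relying on the explicit inf-sup representation of the integrand; the paper's Mazur--Fatou argument is more elementary and robust when such a representation is not so readily available. One small caveat: your appeal to ``standard lower semicontinuity theory for convex integral functionals'' in the first pass is too vague to stand on its own — when both arguments of the integrand converge only weakly, the standard Tonelli-type theorems do not apply directly — but the variational representation you give immediately afterward closes that gap completely, so the proof as a whole is sound.
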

\begin{proof}
The proof is just an application of the direct method of calculus of variations.

\(\kadm\) is the intersection of a shifted closed ball in \(L^\infty(\O)\),
which is weak\(^*\)-sequentially compact owing to Banach--Alaoglu theorem (see for example~\cite[Theorem~3.16]{brezis2010functional}) and a weak\(^*\) closed affine manifold.
Furthermore, the affine manifold \(\Q(f)\) is closed in \(\Q\), and the functional \(\Idloc\) is coercive with respect to the second argument for \(q \in \Q(f)\),
uniformly for \(\kappa \in \kadm\).

Let \((\kappa_k,q_k)\in \kadm\times \Q(f)\) be a minimizing sequence for~\eqref{eq:compliance_local_dual}; without loss of generality we will assume the sequence \(\{ \Idloc(\kappa_k;q_k) \}\) is non-increasing.
In view of the previous arguments the minimizing sequence contains a convergent subsequence, with respect to the weak\(^*\) topology in the first argument and the weak topology in the second.
Without relabelling this subsequence, we assume that 
\((\kappa_k,q_k) \rightharpoonup (\kappa,q)\).
The limit necessarily belongs to \(\kadm\times \Q(f)\), as the feasible set is closed with respect to this type of convergence.
Furthermore, since \(\Omega\) is bounded,  \(\kappa_k \rightharpoonup \kappa\), weakly in \(L^p(\O)\), for any \(1\leq p < \infty\).
Owing to Mazur's lemma (see for example~\cite[Corollary 3.8]{brezis2010functional}), there is a function \(M:\N\to\N\) and a sequence of sets of nonnegative real numbers \(\{\, \lambda(m)_k\colon k=m,\dots,M(m)\,\}\), such that
\(\sum_{k=1}^{M(m)}\lambda(m)_k=1\) and 
\(\lim_{m\to\infty}\|\sum_{k=m}^{M(m)}\lambda(m)_k (\kappa_k,q_k) - (\kappa,q)\|=0\)
in \(L^p(\O)\times \Q\).
In particular, there is a further subsequence of convex combinations of \((\kappa_k,q_k)\), say \(m'\), which converges almost everywhere towards \( (\kappa,q)\) (see for example~\cite[Theorem 4.9]{brezis2010functional}).
Since the integrand \(\kappa^{-1}(x)|q(x)|^2\) is convex and non-negative, we apply
Fatou's lemma (see for example~\cite[Lemma 4.1]{brezis2010functional}) to arrive at the desired conclusion:
\[
\begin{aligned}
\Idloc(\kappa,q)
&\leq \liminf_{m'\to\infty} \Idloc\left(\sum_{k=m'}^{M(m')}\lambda(m')_k \kappa_k,
\sum_{k=m'}^{M(m')}\lambda(m')_k q_k\right)
\leq 
\liminf_{m'\to\infty}\left[\sum_{k=m'}^{M(m')}\lambda(m')_k
\Idloc(\kappa_k;q_k)\right]
\\&\leq
\liminf_{m'\to\infty}
\Idloc(\kappa_{m'};q_{m'}),
\end{aligned}
\]
thereby showing that \((\kappa,q)\) is the optimal solution to~\eqref{eq:compliance_local_dual}.
\end{proof}

\section{Nonlocal Dirichlet principle and the associated optimal design problem}

Recently, nonlocal versions of the problem~\eqref{eq:compliance_local_primal}
have been introduced and actively studied, see~\cite{andres2015type,andres2015nonlocal,andres2017convergence,
evgrafov2019sensitivity,evgrafov2019non}.
Let \(\delta>0\) be a given parameter, which will be referred to as the nonlocal
interaction horizon throughout the manuscript.
Let further \(\Od = \cup_{x\in \O} B(x,\delta)\), where
\(B(x,\delta) = \{\, z \in \Rn : |z-x|<\delta\,\}\).
\(\Od\) is the set of points, which are at most distance \(\delta\) from \(\O\) and consequently
can ``interact'' with points from \(\O\) in the nonlocal model we consider.
Let also \(\Gamma_{\delta} = \Od\setminus\O\) be the ``nonlocal boundary'' of \(\O\).

The strength of nonlocal interaction as a function of the distance will be encoded in a
radial function \(\Wd: \Rn\to \R_+\) with support in \(B(0,\delta)\),
which will be assumed to satisfy the normalization condition
\(\int_{\Rn} |x|^2\Wd^2(x)\dx = K_{2,n}^{-1}\), where
\[K_{p,n}=\frac{1}{|\Sn|}\int_{\Sn}|s\cdot e|^p\,\mathrm{d}s,\]
\(\Sn\) is the \(n-1\)-dimensional unit sphere,
and \(e\in\Sn\) is an arbitrary unit vector~\cite{bourgain2001another}.
For a function \(u:\Rn\to\R\) we define
\begin{equation}\label{eq:defgrad}
  \grad u(x,x') = [u(x)-u(x')]\Wd(x-x'), \quad (x,x')\in \Rn\times\Rn.
\end{equation}
Unless specifically stated otherwise we will extend functions by zero outside of their explicit domain of definition.
With this in mind we will primarily think of \(\grad\) as a linear, possibly unbounded operator \(\grad \in \mathcal{L}(L^2(\O), L^2(\Od\times\Od))\).
For future reference, we summarize the following properties of this operator.
\begin{proposition}\label{prop:grad0}
  The following statements hold.
  \begin{enumerate}
    \item The domain of \(\grad\),
    \(\Udz = D(\grad) = \{\,u \in L^2(\O) \mid \grad u \in L^2(\Od\times\Od)\,\}\)
    is dense in \(L^2(\O)\).
    \item The kernel of \(\grad\), \(\ker\grad\), is trivial.
    \item Poincar{\'e} type inequality holds: \(\|u\|_{L^2(\O)}\leq C\|\grad u\|_{L^2(\Od\times\Od)}\),
    for all \(u\in \Udz\) and \(0<\delta < \overline{\delta}\), with \(C>0\) independent from \(u\) or \(\delta\).
    \item The graph of \(\grad\), \(\graph \grad = \{\,(u,\pnl)\in L^2(\O)\times L^2(\Od\times\Od)
    \mid \pnl=\grad u\,\}\) is closed in \(L^2(\O)\times L^2(\Od\times\Od)\).
    \item \(\Udz\) equipped with the inner product \((u_1,u_2)_{\Udz}=(\grad u_1,\grad u_2)_{L^2(\Od\times\Od)}\)
    is a Hilbert space.
    \item The range of \(\grad\), \(R(\grad) = \{\, \pnl \in L^2(\Od\times\Od) \mid \exists
    u \in L^2(\O): \pnl=\grad u\,\}\) is closed in \(L^2(\Od\times\Od)\).
  \end{enumerate}
\end{proposition}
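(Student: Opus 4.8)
The plan is to prove items~1--3 directly and then deduce items~4--6 by soft functional-analytic arguments; the organizing remark is that, by the definition of the inner product in item~5, the operator \(\grad\colon\Udz\to L^2(\OO)\) is a linear isometry onto its range, so item~6 is a formal consequence once \(\Udz\) is known to be complete.

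I would begin with items~1 and~2. For density it suffices to observe that \(C_c^\infty(\O)\subset\Udz\): any \(u\in C_c^\infty(\O)\) is globally Lipschitz after the zero extension, so \(|\grad u(x,x')|\le\mathrm{Lip}(u)\,|x-x'|\Wd(x-x')\), and since \(\OO\) is bounded the normalization \(\int_{\R^n}|z|^2\Wd^2(z)\,\mathrm{d}z=K_{2,n}^{-1}\) gives \(\|\grad u\|_{L^2(\OO)}^2\le\mathrm{Lip}(u)^2|\Od|\,K_{2,n}^{-1}<\infty\); density of \(C_c^\infty(\O)\) in \(L^2(\O)\) then yields item~1. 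For item~2, \(\grad u=0\) a.e.\ on \(\OO\) means \(u(x)=u(x')\) for a.e.\ pair with \(|x-x'|<\delta\); covering \(\O\) by cubes of diameter \(<\delta\) and using connectedness of \(\O\) forces \(u\) to agree a.e.\ with a constant \(c\) on \(\O\), and testing the identity on pairs \((x,x')\) with \(x\in\Gamma_{\delta}\) (where the zero extension gives \(u(x)=0\)) and \(x'\in B(x,\delta)\cap\O\) forces \(c=0\).

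Item~3 is the heart of the matter, and its uniformity in \(\delta\) is the step I expect to be the main obstacle. I would prove it by contradiction, invoking the now-standard compactness and lower-semicontinuity theory for nonlocal functionals of Bourgain--Brezis--Mironescu and Ponce type~\cite{bourgain2001another,ponce2004estimate,ponce2004new}. A uniform failure produces \(\delta_k\in(0,\overline\delta)\) and \(u_k\in\U_{\delta_k,0}\) with \(\|u_k\|_{L^2(\O)}=1\) and \(\|\mathcal{G}_{\delta_k}u_k\|\to0\); along a subsequence \(\delta_k\to\delta_0\in[0,\overline\delta]\), and the relevant nonlocal compactness theorem provides a strong \(L^2(\O)\)-limit \(u\), necessarily of unit norm. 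The companion lower-semicontinuity inequality---calibrated precisely so that the \(K_{2,n}\)-normalization makes the nonlocal energies dominate the local Dirichlet energy in the limit---gives \(\|\nabla u\|_{L^2(\O)}^2\le\liminf_k\|\mathcal{G}_{\delta_k}u_k\|^2=0\), hence \(u=0\), when \(\delta_k\to0\); when \(\delta_k\to\delta_0>0\) the analogous fixed-horizon argument gives instead \(\mathcal{G}_{\delta_0}u=0\) and then \(u=0\) by item~2. Either way this contradicts \(\|u\|_{L^2(\O)}=1\). The technical crux is checking that the zero extension of elements of \(\Udz\) is compatible with these nonlocal compactness and lower-semicontinuity statements, uniformly for small \(\delta\); a uniform nonlocal Poincar\'e inequality of exactly this form is, however, also recorded in the literature~\cite{andres2015nonlocal,evgrafov2019non}, which one may simply invoke.

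The remaining items are then routine. For item~4, if \(u_k\to u\) in \(L^2(\O)\) and \(\grad u_k\to\pnl\) in \(L^2(\OO)\), pass to a subsequence along which \(u_k\to u\) a.e.\ in \(\O\) and \(\grad u_k\to\pnl\) a.e.\ in \(\OO\); since \(\Wd\) is fixed, \(\grad u_k\to\grad u\) a.e., whence \(\pnl=\grad u\) a.e., so \(\grad u\in L^2(\OO)\), i.e.\ \(u\in\Udz\) and \(\pnl=\grad u\). For item~5, bilinearity and symmetry of \((\cdot,\cdot)_{\Udz}\) are clear and positive-definiteness is exactly item~2, so only completeness needs proof: if \(\{u_k\}\) is Cauchy in \(\Udz\) then \(\{\grad u_k\}\) is Cauchy in \(L^2(\OO)\), hence \(\grad u_k\to\pnl\) there, while item~3 shows \(\{u_k\}\) is Cauchy in \(L^2(\O)\), so \(u_k\to u\) in \(L^2(\O)\); item~4 then gives \(u\in\Udz\), \(\grad u=\pnl\), and \(\|u_k-u\|_{\Udz}=\|\grad u_k-\grad u\|_{L^2(\OO)}\to0\). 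Finally, for item~6, \(\grad\) is a linear isometry from the complete space \(\Udz\) into \(L^2(\OO)\), and the isometric image of a complete space is complete, hence closed; thus \(R(\grad)\) is closed in \(L^2(\OO)\).
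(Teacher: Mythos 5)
Your proof is correct and follows essentially the same route as the paper for items~1--5: for item~1 both rely on \(C_c^\infty(\O)\subset\Udz\) together with the boundedness of \(\Od\times\Od\) and the \(L^2\)-normalization of \(z\mapsto |z|\Wd(z)\); for item~2 both reduce to ``\(u\) constant on \(\O\) and zero on \(\Gamma_\delta\)''; item~3 is in both cases delegated to the literature (the paper cites \cite[Lemma~4.2 and Remark~4.3]{evgrafov2019non}, which is precisely what you suggest at the end of your sketch, your BBM-style contradiction argument being redundant once that citation is made); item~4 is the same pointwise-a.e.\ subsequence argument; and item~5 follows from items~3 and~4 in both. The one place you diverge is item~6. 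The paper takes a bounded sequence \(\grad u_k\to\hat\pnl\), extracts a weakly convergent subsequence \(u_k\rightharpoonup\hat u\) in the Hilbert space \(\Udz\), and identifies \(\hat\pnl=\grad\hat u\) via weak continuity and uniqueness of weak limits. You instead observe that with the inner product of item~5 the map \(\grad\colon\Udz\to L^2(\Od\times\Od)\) is by definition a linear isometry, so \(R(\grad)\) is the isometric image of a complete space and hence complete and closed. This is shorter, avoids weak compactness entirely, and makes transparent that items~5 and~6 are two faces of the same fact; it is a genuine (if modest) streamlining of the paper's argument.
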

\begin{proof}
  \begin{enumerate}
  \item It is sufficient to note that \(C^\infty_c(\O)\subset \Udz\),
  since \(\forall \phi \in C^\infty_c(\O)\) we have
  \((x,x')\mapsto [\phi(x)-\phi(x')]|x-x'|^{-1} \in L^\infty(\Od\times\Od)\)
  and \((x,x')\mapsto \Wd(x-x')|x-x'| \in L^2(\Od\times\Od)\).
  \item \(\grad u\equiv 0\) if and only if \(u\) is constant on \(\Od\); additionally,
  \(u\equiv 0\) on \(\Od\setminus\O\).
  \item See for example, \cite[Lemma 4.2 and Remark 4.3]{evgrafov2019non}.
  \item
  Assume that \((u_k,\pnl_k)\in\graph\grad\), \(k=1,2,\dots\), and
  that \(\lim_{k\to\infty} \|u_k-\hat{u}\|_{L^2(\O)} = \lim_{k\to\infty} \|\pnl_k-\hat{\pnl}\|_{L^2(\Od\times\Od)} = 0\).
  Then, up to a subsequence, we have pointwise convergence
  \([u_k(x)-u_k(x')]\Wd(x-x')\to [\hat{u}(x)-\hat{u}(x')]\Wd(x-x')\)
  and
  \(\pnl_k(x,x')\to \hat{\pnl}(x,x')\), for almost all \((x,x')\in\Od^2\).
  Therefore, necessarily we have \((\hat{u},\hat{\pnl})\in\graph\grad\).
  \item
  This follows from points 4 and 3, where the latter shows the equivalence between the norms
  induced by the graph inner product \((u_1,u_2) = (u_1,u_2)_{L^2(\O)} + (\grad u_1,\grad u_2)_{L^2(\Od\times\Od)}\)
  and the inner product \((u_1,u_2)_{\Udz}=(\grad u_1,\grad u_2)_{L^2(\Od\times\Od)}\)
  \item
  Assume now that \((u_k,\pnl_k)\in\graph\grad\), \(k=1,2,\dots\), and
  that \(\lim_{k\to\infty} \|\pnl_k-\hat{\pnl}\|_{L^2(\Od\times\Od)} = 0\).
  In particular, \(\|\grad u_k\|_{L^2(\Od\times\Od)}\) is bounded.
  Therefore, according to point 5, there is \(\hat{u} \in \Udz\), such that
  \(u_k \wto \hat{u}\), weakly in \(\Udz\).
  Since \(\grad: \Udz \to L^2(\Od\times\Od)\) is a bounded linear operator, by the closed graph theorem and 5, we
  have the convergence \(\pnl_k = \grad u_k \wto \grad \hat{u}\), weakly in \(L^2(\Od\times\Od)\).
  As the weak limit is unique, \(\hat{\pnl} = \grad \hat{u}\), and consequently
  \(\hat{\pnl}\in R(\grad)\).\qedhere
  \end{enumerate}
\end{proof}

It is natural to assume that for each pair of interacting points \((x,x') \in \Od\times\Od\),
the strength of their interaction is affected not only by the distance between the points,
but also by the material properties (in our case, conductivity) at these points.
We will denote by \(\knl:\Od\times\Od \to \R\) the precise nature of this dependence.
It is reasonable to assume that \(\knl(x,x')=\knl(x',x)\) encapsulates the averaged properties
\(\kappa(x)\) and \(\kappa(x')\).
In particular, in~\cite{andres2015type,andres2015nonlocal,andres2017convergence}
the arithmetic averaging \(2\knl(x,x')=\kappa(x)+\kappa(x')\) was assumed, and
\cite{evgrafov2019sensitivity,evgrafov2019non} considered the case of the geometric
averaging \(\knl^2(x,x')=\kappa(x)\kappa(x')\).
We will focus on the case of the harmonic averaging of conductivities
\(\knl(x,x') = 2\kappa(x)\kappa(x')[\kappa(x)+\kappa(x')]^{-1}\), which is the
same as assuming the arithmetic averaging for the resistivities \(\kappa^{-1}\).
For now, we can leave out the precise nature of the dependence of \(\knl\) on \(\kappa\),
but we make blanket assumptions \(\knl(x,x')=\knl(x',x)\) and
\(0<\underline{\kappa}\le\knl\le\overline{\kappa}<+\infty\),
for almost all \((x,x')\in\Od\times\Od\).

With these definitions the nonlocal analogue of the Dirichlet principle amounts
to the fact that the steady-state temperature \(u\in\Udz\) can be defined as
the unique minimizer of the assocated energy functional
\begin{equation}\label{eq:def_ap}
  \begin{aligned}
  \Ip(\knl;v)&=\frac{1}{2}\ap(\knl;v,v) - \ell(v), &\qquad&\text{where}\\
  \ap(\knl;u,v)&=\int_{\Od}\int_{\Od} \knl(x,x')\grad u(x,x')\grad v(x,x')\dx\dx',%
  \end{aligned}
\end{equation}
over all \(v \in \Udz\), which exists for any
\(0<\underline{\kappa}\le\knl\le\overline{\kappa}<+\infty\) as a direct consequence
of the Lax--Milgram lemma in view of Proposition~\ref{prop:grad0}.
This minimum satisfies the optimality conditions
\begin{equation}\label{eq:primalproblem}
  \ap(\knl;u,v)=\ell(v), \quad\forall v\in\Udz.
\end{equation}
Quite naturally, the nonlocal analogue of the saddle-point
problem~\eqref{eq:compliance_local_primal} can now be stated as follows:
\begin{equation}\label{eq:compliance_nl_primal}
  p^*_{\delta}=\max_{\kappa\in\kadmd} \min_{u\in \Udz} \Ip(\knl;u),\\
\end{equation}
where, for simplicity, we define 
\(\kadmd =\{\, \kappa \in L^\infty(\Od) \colon
\kappa|_{\O} \in \kadm \land \kappa|_{\Od\setminus\O} \in [\underline{\kappa},\overline{\kappa}]\,\}\).

Now we state an existence result for this problem:
\begin{theorem}\label{thm:primal_summary}
    Suppose that the map \(\kadmd\ni \kappa \mapsto \knl\), where the latter satisfies our blanket assumptions, is continuous and concave, which for example holds when \(\knl(x,x')\) is arithmetic, geometric, or harmonic average of \(\kappa(x)\) and \(\kappa(x')\). Then the problem~\eqref{eq:compliance_nl_primal} admits an optimal solution,
      for each \(\delta >0\).
\end{theorem}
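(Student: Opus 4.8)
The plan is to run the direct method of the calculus of variations on the reduced, state-eliminated objective
\[
J(\kappa)\;:=\;\min_{v\in\Udz}\Ip(\knl;v),
\]
in close analogy with the proof of Proposition~\ref{prop:dual_loc_exist}. Given \(\kappa\in\kadmd\) I write \(\knl\) for the associated kernel, which inherits the blanket bounds \(0<\underline{\kappa}\le\knl\le\overline{\kappa}\), and for a sequence \(\kappa_k\) I write \(\knl_k\) for the corresponding kernels. First I would note that \(J\) is well defined and finite: the minimum defining \(J(\kappa)\) is attained at a unique \(u(\kappa)\in\Udz\) solving~\eqref{eq:primalproblem}, as observed after~\eqref{eq:def_ap} via Lax--Milgram and Proposition~\ref{prop:grad0}; moreover \(J(\kappa)\le\Ip(\knl;0)=0\) and \(J\) is bounded below uniformly in \(\kappa\) by coercivity and the Poincar\'e inequality, so \(p^*_\delta=\sup_{\kappa\in\kadmd}J(\kappa)\in\R\). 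Finally, \(\kadmd\) is a bounded, convex subset of \(L^2(\Od)\) that is closed for strong \(L^2\) convergence — the pointwise box constraint passes to almost-everywhere limits and the averaging constraint \(\int_\O\kappa\le\gamma|\O|\) is a closed half-space — hence it is weakly sequentially compact in \(L^2(\Od)\).

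I would then take a maximising sequence \(\{\kappa_k\}\subset\kadmd\) with \(J(\kappa_k)\to p^*_\delta\) and, passing to a subsequence (not relabelled), assume \(\kappa_k\wto\kappa\) weakly in \(L^2(\Od)\) with \(\kappa\in\kadmd\); it remains to show \(\limsup_k J(\kappa_k)\le J(\kappa)\), which combined with \(J(\kappa)\le p^*_\delta\) forces \(J(\kappa)=p^*_\delta\) and hence optimality of \((\kappa,u(\kappa))\) in~\eqref{eq:compliance_nl_primal}. Let \(u^*:=u(\kappa)\) be the state of the limiting conductivity and consider the single-slice functional
\[
\Phi(\kappa)\;:=\;\int_{\Od}\int_{\Od}\knl(x,x')\,\bigl|\grad u^*(x,x')\bigr|^2\dx\dx',
\]
so that, since \(u^*\) is held fixed, \(J(\kappa_k)\le\Ip(\knl_k;u^*)=\tfrac12\Phi(\kappa_k)-\ell(u^*)\) while \(\Ip(\knl;u^*)=J(\kappa)\). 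Now \(\Phi\) is concave on \(\kadmd\): concavity of the map \(\kappa\mapsto\knl\) gives \(\knl_{t\kappa_1+(1-t)\kappa_2}\ge t\,\knl_{\kappa_1}+(1-t)\,\knl_{\kappa_2}\) almost everywhere, and multiplying by the non-negative weight \(|\grad u^*|^2\) and integrating preserves the inequality; and \(\Phi\) is continuous for strong \(L^2\) convergence, by extracting an almost-everywhere convergent subsequence, using the continuity of \(\kappa\mapsto\knl\), and applying dominated convergence with the bounds \(|\knl_k|\le\overline{\kappa}\) and \(|\grad u^*|^2\in L^1(\Od\times\Od)\). A concave functional that is strongly continuous on a convex set is weakly upper semicontinuous there (a standard consequence of Mazur's lemma, used in exactly the form appearing in Proposition~\ref{prop:dual_loc_exist}), so \(\limsup_k\Phi(\kappa_k)\le\Phi(\kappa)\), whence
\[
\limsup_k J(\kappa_k)\;\le\;\tfrac12\Phi(\kappa)-\ell(u^*)\;=\;\Ip(\knl;u^*)\;=\;J(\kappa),
\]
which completes the argument.

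The one place where genuine care is needed is the weak upper semicontinuity of \(\Phi\) — that is, making the hypotheses ``\(\kappa\mapsto\knl\) continuous and concave'' bite in the weak \(L^2\) topology. The subtlety is that \(\knl_k\) itself need not converge weakly to \(\knl\) (mere continuity does not survive weak limits), so one cannot simply pass to the limit in \(\Ip(\knl_k;u^*)\); it is precisely the \emph{concavity} of \(\kappa\mapsto\knl\), together with the non-negativity of the weight \(|\grad u^*|^2\), that rescues the correct one-sided estimate. For the arithmetic, geometric, and harmonic averages all of this is transparent, since there \(\knl(x,x')\) is a continuous and jointly concave function of the pair \((\kappa(x),\kappa(x'))\); everything else reduces to the Lax--Milgram lemma already invoked around~\eqref{eq:primalproblem} and the direct-method bookkeeping of Proposition~\ref{prop:dual_loc_exist}.
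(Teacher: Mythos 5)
Your proof is correct and is essentially the expansion of exactly the argument the paper gestures at: the paper's proof is a two-line reference to the concavity-based direct method of C\'ea and of~\cite[Proposition~4]{evgrafov2019non}, and your write-up supplies precisely that chain of ideas (weak sequential compactness of \(\kadmd\), reduction to upper semicontinuity of the reduced objective \(J\), and concavity of \(\kappa\mapsto\knl\) combined with Mazur to upgrade strong continuity of the Dirichlet form to weak upper semicontinuity). The one cosmetic refinement in your version is that you avoid appealing to the general fact that an infimum of upper-semicontinuous functions is upper semicontinuous, instead testing directly against the fixed state \(u^*=u(\kappa)\) of the weak limit; this is a clean, self-contained way to phrase what is otherwise the standard ``\(J\) is concave and u.s.c.\ as an infimum of concave u.s.c.\ slices'' step, and it does not change the substance of the argument.
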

\begin{proof}
  The proof relies heavily on concavity and is given along the lines of the original work in~\cite{cea1970example}, or~\cite[Proposition 4]{evgrafov2019non} for a more recent reference.
\end{proof}

\section{Nonlocal Kelvin principle and the associated optimal design problem}\label{subsec:nloc_mixed}

We now focus on the alternative, dual, approach to~\eqref{eq:compliance_nl_primal}, which avoids the need to deal with saddle-point problems.
We will focus on the case of harmonic averaging of conductivities, but note that this is certainly not a limiting factor in our work.

Let us define a nonlocal linear divergence operator \(\diver : D(\diver)\subset L^2(\Od\times\Od)\to L^2(\O)\)
to be the negative adjoint of the densely defined nonlocal gradient operator \(\grad\).
That is, we define \(\diver\) to satisfy the integration by parts formula:
\begin{equation}\label{eq:diver}
  (\diver \qnl, v)_{L^2(\O)} = -(\qnl,\grad v)_{L^2(\Od\times\Od)},
  \qquad \forall \qnl\in D(\diver)\subset L^2(\Od\times\Od), v\in \Udz.
\end{equation}
We will denote by \(\Qd\) the domain of \(\diver\), that is,
\(\Qd = \{\, \qnl \in L^2(\Od\times\Od) \mid \diver \qnl \in L^2(\O)\,\}\).
This space will be equipped with the graph inner product
\((\qnl,\pnl)_{\Qd} = (\qnl,\pnl)_{L^2(\Od\times\Od)} + (\diver \qnl,\diver \pnl)_{L^2(\O)}\).
We will denote by \(\Qd(f)\) its closed affine subspace
\(\{\, \qnl \in \Qd \mid \diver \qnl = f \,\}\).
The following proposition summarizes the pertinent properties of \(\diver\).
\begin{proposition}\label{prop:diver0}
  The following statements hold.
  \begin{enumerate}
  \item \(\Qd\) is dense in \(L^2(\Od\times\Od)\).
  \item \(\Qd\) is a Hilbert space.
  \item \(\diver: \Qd \to L^2(\O)\) is bounded and surjective.
  \item There is \(C>0\) such that for each \(v \in L^2(\O)\) and each \(0 < \delta < \overline{\delta}\)
  there is \(\qnl_{v,\delta} \in \Qd\) such that \(\diver\qnl_{v,\delta}=v\) and
  \(\|\qnl_{v,\delta}\|_{\Qd}\leq C\|v\|_{L^2(\O)}\).
  \end{enumerate}
\end{proposition}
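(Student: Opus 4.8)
The plan is to obtain all four statements from the corresponding properties of the nonlocal gradient \(\grad\) established in Proposition~\ref{prop:grad0}, using the fact that \(\diver = -\grad^*\) together with abstract closed-range duality. First I would record that, since \(\grad\) is a densely defined closed operator from \(L^2(\O)\) into \(L^2(\OO)\) (points 1 and 4 of Proposition~\ref{prop:grad0}), its adjoint \(\grad^*\), and hence \(\diver=-\grad^*\), is automatically a closed densely defined operator; in fact \(D(\diver)=\Qd\) is dense in \(L^2(\OO)\) because \(\grad\) is itself closed and \((\grad^*)^* = \grad\), so \(\overline{D(\grad^*)}=L^2(\OO)\) would fail only if \(\grad^*\) were not densely defined, which contradicts the closedness of \(\grad\). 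This gives statement~1. Statement~2 is then immediate: the graph of a closed operator, equipped with the graph inner product, is a closed subspace of the Hilbert space \(L^2(\OO)\times L^2(\O)\), hence a Hilbert space, and \(\Qd\) with \((\cdot,\cdot)_{\Qd}\) is isometrically this graph.

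For statements~3 and~4 — which are really the heart of the proposition and the step I expect to be the main obstacle — I would argue as follows. Boundedness of \(\diver:\Qd\to L^2(\O)\) is clear from the definition of the graph norm. For surjectivity with a uniform bound, the key is the closed range theorem: for the densely defined closed operator \(\grad\), one has \(R(\diver)=R(\grad^*) = (\ker\grad)^{\perp}\), and by Proposition~\ref{prop:grad0}(2) the kernel of \(\grad\) is trivial, so \(R(\diver)\) is dense in \(L^2(\O)\); moreover \(R(\grad)\) is closed (Proposition~\ref{prop:grad0}(6)), and the closed range theorem says \(R(\grad)\) closed \(\iff R(\grad^*)\) closed, so \(R(\diver)=L^2(\O)\). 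This proves statement~3. To get statement~4 with a constant \(C\) independent of \(\delta\), I would invoke the bounded inverse / open mapping quantitative form: since \(\diver\) maps \(\Qd\) onto \(L^2(\O)\), for any \(v\) there is a solution \(\qnl\) of \(\diver\qnl=v\), and among all such solutions I pick the one of minimal \(\Qd\)-norm, i.e. the one orthogonal in \(\Qd\) to \(\ker(\diver|_{\Qd})\); equivalently, I can take \(\qnl_{v,\delta} = \grad w\) where \(w\in\Udz\) solves the variational problem \((\grad w,\grad\phi)_{L^2(\OO)} = -(v,\phi)_{L^2(\O)}\) for all \(\phi\in\Udz\) (Lax--Milgram on \(\Udz\), which is a Hilbert space by Proposition~\ref{prop:grad0}(5)). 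Then \(\diver\qnl_{v,\delta} = \diver\grad w = v\) by the very definition~\eqref{eq:diver}, and the estimate \(\|\grad w\|_{L^2(\OO)}\le \|v\|_{L^2(\O)}\|w\|_{L^2(\O)}/\|w\|_{\Udz}\) combined with the Poincar\'e inequality of Proposition~\ref{prop:grad0}(3) — whose constant is \(\delta\)-uniform for \(0<\delta<\overline{\delta}\) — yields \(\|\qnl_{v,\delta}\|_{\Qd}^2 = \|\grad w\|_{L^2(\OO)}^2 + \|v\|_{L^2(\O)}^2 \le (C_P^2+1)\|v\|_{L^2(\O)}^2\), i.e. a bound with a \(\delta\)-independent constant.

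The delicate point, and the one I would be most careful about, is ensuring that the constant in statement~4 does \emph{not} degenerate as \(\delta\to 0\): this is exactly where the \(\delta\)-uniform Poincar\'e inequality (Proposition~\ref{prop:grad0}(3), citing \cite[Lemma 4.2 and Remark 4.3]{evgrafov2019non}) does the essential work, and one must route the construction of \(\qnl_{v,\delta}\) through \(\grad w\) rather than through an abstract right inverse of \(\diver\) whose norm a priori has no \(\delta\)-uniform control. Everything else is soft functional analysis; I would present statements~1 and~2 briefly and devote the bulk of the write-up to the explicit \(\grad w\) construction and the uniform estimate in statements~3--4.
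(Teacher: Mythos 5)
Your argument is correct, and for the substantial parts (statements~3 and~4) it is essentially the paper's proof: closed range theorem combined with triviality of $\ker\grad$ gives surjectivity, and the $\delta$-uniform constant in~4 is obtained by representing the minimal-norm solution as $\grad w$ for a $w\in\Udz$ solving a variational problem, then invoking the $\delta$-uniform Poincar\'e inequality of Proposition~\ref{prop:grad0}(3). The one place where you diverge is statement~1: you deduce density of $\Qd$ from the abstract fact that the adjoint of a densely defined \emph{closed} operator between Hilbert spaces is itself densely defined (using $(\grad^*)^*=\grad$). The paper instead proves density by an explicit computation showing that every Lipschitz $\qnl\in C^{0,1}(\Od\times\Od)$ lies in $\Qd$, deriving along the way the concrete integral formula $\diver\qnl(x)=\int_{\Od}[\qnl(x',x)-\qnl(x,x')]\Wd(x-x')\dx'$ together with an $L^2$ bound. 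Your route is shorter and more economical; the paper's is heavier but has the side benefit of identifying a concrete dense subspace of regular two-point fluxes and an explicit pointwise expression for $\diver$ on it, which is useful to have on record. Both are valid, and your emphasis on routing statement~4 through $\grad w$ rather than through an abstract $\delta$-dependent right inverse is exactly the right instinct.
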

\begin{proof}
  \begin{enumerate}
    \item Note that for all sufficiently regular \(\qnl\), for example for \(\qnl\in C^{0,1}(\Od\times\Od)\),
    and for all \(v\in\Udz\) we have, owing to Fubini's theorem:
    \begin{equation*}
      \begin{aligned}
      -(\qnl,\grad v)_{L^2(\Od\times\Od)} &= 
      -\lim_{\epsilon\searrow 0} \iint_{O_\epsilon} \frac{\qnl(x,x')}{|x-x'|}[v(x)-v(x')]|x-x'|\Wd(x-x')\dx'\dx
      \\&=
      \lim_{\epsilon\searrow 0} \iint_{O_\epsilon} \bigg[\frac{\qnl(x,x')}{|x-x'|}v(x')|x-x'|\Wd(x-x')
      -
      \frac{\qnl(x,x')}{|x-x'|}v(x)|x-x'|\Wd(x-x')\bigg]\dx'\dx
      \\&=
       \lim_{\epsilon\searrow 0} \iint_{O_\epsilon}v(x)[\qnl(x',x)-\qnl(x,x')]\Wd(x-x')\dx'\dx
      \\&=\int_{\O} v(x)\underbrace{\bigg\{\int_{\Od}[\qnl(x',x)-\qnl(x,x')]\Wd(x-x')\dx'\bigg\}}_{=\diver \qnl(x)}\dx,
      \end{aligned}
    \end{equation*}
    where \(O_\epsilon = \{\, (x,x')\in\Od\times\Od \colon |x-x'|>\epsilon \,\}\).
    Note that the last equality holds because
    \begin{equation*}
      \begin{aligned}
        \|\diver\qnl\|_{L^2(\O)}^2 &\leq
      \int_{\Od}\bigg\{\int_{\Od}[\qnl(x',x)-\qnl(x,x')]\Wd(x-x')\dx'\bigg\}^2\dx
      \\&\leq
      |\Od|\int_{\Od}\int_{\Od}\bigg[\underbrace{\frac{\qnl(x',x)-\qnl(x,x')}{|x-x'|}}_{\leq \sqrt{2}\|\qnl\|_{C^{0,1}(\Od\times\Od)}}|x-x'|\Wd(x-x')\bigg]^2\dx'\dx
      \leq
      2K_{2,n}^{-1}|\Od|^2 \|\qnl\|_{C^{0,1}(\Od\times\Od)}^2,
      \end{aligned}
    \end{equation*}
    and we have utilized H{\"o}lder's inequality.
    \item It is sufficient to note that \(\diver\) is necessarily a closed operator
    as an adjoint operator.
    \item In view of Proposition~\ref{prop:grad0}, closed range theorem applies to
    \(\diver\), and \(R(\diver) = (\ker\grad)^\perp = L^2(\O)\).
    \item For a fixed \(\delta > 0\), the existence of \(\qnl_{v,\delta}\) with the
    claimed properties follows immediately from the bounded inverse theorem applied to
    the bounded bijection \(\diver: (\ker\diver)^{\perp}\subset \Qd \to L^2(\O)\).
    Therefore, it only remains to show that the constant \(C>0\) can be selected independently
    from \(\delta \in (0,\overline{\delta})\).
    Indeed, let us consider an arbitrary \(v \in L^2(\O)\) and \(\delta \in (0,\overline{\delta})\).
    We know that there is unique \(\qnl_{v,\delta} \in (\ker\diver)^{\perp}\),
    such that \(\diver \qnl_{v,\delta} = v\).
    Note that owing to the closed range theorem, we know that
    \(\qnl_{v,\delta} \in R(\grad)=(\ker\diver)^{\perp}\), and therefore there is
    \(u \in \Udz\) such that \(\qnl_{v,\delta} = \grad u\).
    Such a \(u\) necessarily satisfies the variational statement
    \[
    (v,\tilde{v})_{L^2(\O)} = (\diver\grad u,\tilde{v})_{L^2(\O)}
    =-(\grad u,\grad\tilde{v})_{L^2(\Od\times\Od)}
    =-\ap(1;u,\tilde{v}), \qquad \forall \tilde{v}\in \Udz.
    \]
    In particular, taking $\tilde{v}=u$ and applying Cauchy-Bunyakovsky-Schwarz inequality, we obtain the inequality
    \[ \|\grad u\|^2_{L^2(\Od\times\Od)}\le \|v\|_{L^2(\O)}\|u\|_{L^2(\O)},\]
    and by Proposition~\ref{prop:grad0}, point 3, we have that \(\|\qnl_{v,\delta}\|_{L^2(\Od\times\Od)}=\|\grad u\|_{L^2(\Od\times\Od)}
    =\|u\|_{\Udz} \leq \tilde{C}\|v\|_{L^2(\O)}\)
    with \(\tilde{C}\) independent from \(\delta \in (0,\overline{\delta})\) or \(v\).
    Consequently, \(\|\qnl_{v,\delta}\|_{\Qd} \leq
    (\tilde{C}^2+1)^{1/2}\|v\|_{L^2(\O)}\).
    \qedhere
  \end{enumerate}
\end{proof}

Note that the integration by parts formula~\eqref{eq:diver} and the density
of \(\Udz\) in \(L^2(\O)\) imply that the negative adjoint
\(\diver^* \in \mathcal{B}(L^2(\O), \Qd')\)
of \(\diver \in \mathcal{B}(\Qd,L^2(\O))\)\footnote{Here we identify \(L^2(\O)\) with its dual.}
coinsides with \(\grad\) on a dense set \(\Udz\).
For this reason and with a minimal abuse of notation we will also use
\(\grad\) to refer to \(\diver^*\), with no confusion arising from the context.

\begin{proposition}[Poincar{\'e} inequality for \(\grad=\diver^*\)]\label{prop:Poincare_divstar}
  There is \(C>0\) such that for each \(v \in L^2(\O)\) and each \(0 < \delta < \overline{\delta}\)
  we have the inequality \(\|v\|_{L^2(\O)} \leq C \|\grad v\|_{\Qd'}\).
\end{proposition}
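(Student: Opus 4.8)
The plan is to recognize this inequality as nothing more than the dual reformulation of the uniform right-inverse bound already established in Proposition~\ref{prop:diver0}, point~4. Recall that, by the definition of $\grad = \diver^*$ as the (negative) adjoint of $\diver \in \mathcal{B}(\Qd, L^2(\O))$ and of the dual norm on $\Qd'$, we have, for every $v \in L^2(\O)$,
\[
\|\grad v\|_{\Qd'} \;=\; \sup_{\substack{\qnl \in \Qd\\ \|\qnl\|_{\Qd}\le 1}} \bigl|\langle \grad v, \qnl\rangle_{\Qd',\Qd}\bigr| \;=\; \sup_{\substack{\qnl \in \Qd\\ \|\qnl\|_{\Qd}\le 1}} \bigl|(v, \diver \qnl)_{L^2(\O)}\bigr|,
\]
the sign of the adjoint being immaterial inside the absolute value. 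Thus it suffices to exhibit, for each fixed $v$, a single admissible test field $\qnl$ for which the quantity under the supremum is comparable to $\|v\|_{L^2(\O)}$, with a comparison constant that does not deteriorate as $\delta \searrow 0$.

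First I would dispose of the trivial case $v = 0$. For $v \ne 0$, I would invoke Proposition~\ref{prop:diver0}, point~4, to obtain $\qnl_{v,\delta} \in \Qd$ with $\diver \qnl_{v,\delta} = v$ and $\|\qnl_{v,\delta}\|_{\Qd} \le C\|v\|_{L^2(\O)}$, with $C > 0$ independent of $v$ and of $\delta \in (0,\overline{\delta})$. Since $v \ne 0$ forces $\qnl_{v,\delta} \ne 0$, the normalized field $\qnl_{v,\delta}/\|\qnl_{v,\delta}\|_{\Qd}$ is a unit vector in $\Qd$ and hence an admissible competitor in the supremum above. Testing against it and using $\diver \qnl_{v,\delta} = v$ yields
\[
\|\grad v\|_{\Qd'} \;\ge\; \frac{\bigl|(v, \diver \qnl_{v,\delta})_{L^2(\O)}\bigr|}{\|\qnl_{v,\delta}\|_{\Qd}} \;=\; \frac{\|v\|_{L^2(\O)}^2}{\|\qnl_{v,\delta}\|_{\Qd}} \;\ge\; \frac{\|v\|_{L^2(\O)}^2}{C\|v\|_{L^2(\O)}} \;=\; \frac{1}{C}\,\|v\|_{L^2(\O)}.
\]
Rearranging gives the asserted estimate $\|v\|_{L^2(\O)} \le C\|\grad v\|_{\Qd'}$ with exactly the same $C$, which by construction is uniform over $\delta \in (0,\overline{\delta})$.

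I do not expect any genuine obstacle here: the whole analytic content — surjectivity of $\diver$ together with a right inverse whose norm is controlled uniformly in the horizon — has already been extracted in Proposition~\ref{prop:diver0}, and the Poincar\'e inequality for $\diver^*$ is merely its dual face. The only points deserving a moment's attention are the sign and pairing conventions relating $\grad$, $\diver^* $, and the $L^2(\O)$ self-duality, and the (elementary) observation that the constant passes verbatim from the primal right-inverse bound to its dual lower bound — conceptually, the statement that a bounded linear operator admits a right inverse of norm $\le C$ precisely when its adjoint is bounded below by $C^{-1}$.
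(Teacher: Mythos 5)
Your argument is correct and follows exactly the paper's own route: you use the uniform right-inverse bound from Proposition~\ref{prop:diver0}, point~4, to produce $\qnl_{v,\delta}$ with $\diver\qnl_{v,\delta}=v$ and $\|\qnl_{v,\delta}\|_{\Qd}\le C\|v\|_{L^2(\O)}$, and then test the dual-norm supremum against this element. The only cosmetic differences from the paper's proof are the explicit normalization of the test field and the remark on the sign convention; the substance is identical.
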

\begin{proof}
  One could for example infer this from Proposition~\ref{prop:diver0}, point 4:
  \[
  \|\grad v\|_{\Qd'}
  = \sup_{\qnl\in\Qd\setminus 0}\frac{\langle \grad v,\qnl\rangle}{\|\qnl\|_{\Qd}}
  = \sup_{\qnl\in\Qd\setminus 0}\frac{(v, \diver \qnl)_{L^2(\O)}}{\|\qnl\|_{\Qd}}
  \geq
  \frac{(v, \diver \qnl_{v,\delta})_{L^2(\O)}}{\|\qnl_{v,\delta}\|_{\Qd}}
  \geq C^{-1}\|v\|_{L^2(\O)},
  \]
  where both \(\qnl_{v,\delta} \in \Qd\) and \(C>0\) are as given in Proposition~\ref{prop:diver0}, point 4.
\end{proof}

We now proceed as in Section~\ref{sec:local} and define a nonlocal
flux variable \(\qnl(x,x') = -\knl(x,x')\grad u(x,x')\in L^2(\Od\times\Od)\),
where \(u \in \Udz\) is the nonlocal steady state temperature.
Then the pair \((\qnl,u)\) satisfies the following mixed variational problem:
\begin{equation}\label{eq:mixedproblem}
  \begin{aligned}
    \ad(\knl;\qnl,\pnl) + b(\pnl,u) &= 0, &\qquad& \forall \pnl \in \Qd,\\
    b(\qnl,v) &= -\ell(v), &\qquad& \forall v \in L^2(\O),\footnotemark\\
  \end{aligned}
\end{equation}
\footnotetext{%
Note that a priori this equation holds only \(\forall v \in \Udz\), but
extends to \(L^2(\O)\) by continuity.}
where
\begin{equation}
  \begin{aligned}
    \ad(\knl;\qnl,\pnl) &= \int_{\Od}\int_{\Od} \knl^{-1}(x,x')\qnl(x,x')\pnl(x,x')\dx\dx',
    &\quad&\text{and}\\
    b(\pnl,v) &= -\int_{\O} \diver\pnl(x)v(x)\dx.
  \end{aligned}
\end{equation}
We will now verify that the system~\eqref{eq:mixedproblem} admits a unique solution
for an arbitrary heat source \(f \in L^2(\O)\).

\begin{proposition}\label{prop:cont}
  Bilinear forms \(\ad(\knl;\cdot,\cdot)\) and \(b(\cdot,\cdot)\) are continuous,
  and \(\ad(\knl;\cdot,\cdot)\) is coercive on \(\ker \diver \subset \Qd\),
  with moduli of continuity and coercivity independent from \(\delta\in(0,\overline{\delta})\) or \(\knl\in\kadmd\) (but of course dependent on
  \(\underline{\kappa}\) and \(\overline{\kappa}\)).
\end{proposition}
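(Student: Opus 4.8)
The plan is to dispose of each of the three assertions by an elementary direct estimate, using only the pointwise bounds \(0<\underline{\kappa}\le\knl\le\overline{\kappa}<+\infty\) (equivalently \(\overline{\kappa}^{-1}\le\knl^{-1}\le\underline{\kappa}^{-1}\)) furnished by the blanket assumptions, together with the fact that, by the very definition of the graph inner product on \(\Qd\), one has \(\|\pnl\|_{L^2(\Od\times\Od)}\le\|\pnl\|_{\Qd}\) and \(\|\diver\pnl\|_{L^2(\O)}\le\|\pnl\|_{\Qd}\) for every \(\pnl\in\Qd\). None of these ingredients involves \(\delta\), so the \(\delta\)-uniformity of the resulting constants will be automatic.

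For continuity of \(\ad(\knl;\cdot,\cdot)\) I would bound the weight \(\knl^{-1}\) by \(\underline{\kappa}^{-1}\) and apply the Cauchy--Schwarz inequality in \(L^2(\Od\times\Od)\), obtaining \(|\ad(\knl;\qnl,\pnl)|\le\underline{\kappa}^{-1}\|\qnl\|_{L^2(\Od\times\Od)}\|\pnl\|_{L^2(\Od\times\Od)}\le\underline{\kappa}^{-1}\|\qnl\|_{\Qd}\|\pnl\|_{\Qd}\); the modulus \(\underline{\kappa}^{-1}\) depends on neither \(\delta\) nor \(\knl\). For continuity of \(b\) I would apply Cauchy--Schwarz in \(L^2(\O)\) to \(b(\pnl,v)=-(\diver\pnl,v)_{L^2(\O)}\), giving \(|b(\pnl,v)|\le\|\diver\pnl\|_{L^2(\O)}\|v\|_{L^2(\O)}\le\|\pnl\|_{\Qd}\|v\|_{L^2(\O)}\), so the modulus is \(1\).

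The coercivity claim is the only point where the specific shape of the graph norm matters. If \(\qnl\in\ker\diver\) then \(\diver\qnl=0\), so the graph norm collapses to the plain \(L^2\)-norm, \(\|\qnl\|_{\Qd}^2=\|\qnl\|_{L^2(\Od\times\Od)}^2\). Bounding \(\knl^{-1}\) from below by \(\overline{\kappa}^{-1}\) then yields \(\ad(\knl;\qnl,\qnl)=\iint_{\Od\times\Od}\knl^{-1}|\qnl|^2\ge\overline{\kappa}^{-1}\|\qnl\|_{L^2(\Od\times\Od)}^2=\overline{\kappa}^{-1}\|\qnl\|_{\Qd}^2\), so the coercivity modulus on \(\ker\diver\) is \(\overline{\kappa}^{-1}\), again independent of \(\delta\) and \(\knl\).

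I do not anticipate any genuine difficulty: all three bounds are one-line consequences of Cauchy--Schwarz and the two-sided bound on \(\knl\), and the \(\delta\)-uniformity is built in since the only constants that appear, \(\underline{\kappa}\) and \(\overline{\kappa}\), are fixed a priori. The one point worth emphasising is that the restriction to \(\ker\diver\) in the coercivity statement is essential: on all of \(\Qd\) the form \(\ad\) controls only the \(L^2(\Od\times\Od)\)-component of the graph norm and not the \(\|\diver\cdot\|_{L^2(\O)}\)-component, so \(\ad\) fails to be coercive on the full space --- which is precisely why the inf-sup (LBB) condition, rather than plain ellipticity, is what will be needed to close the analysis of the mixed system~\eqref{eq:mixedproblem}.
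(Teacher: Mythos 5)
Your proof is correct, and the paper's own proof simply reads ``Trivial,'' so you have spelled out exactly the elementary Cauchy--Schwarz and two-sided-bound arguments the authors had in mind. The key observation that the graph norm reduces to the plain \(L^2(\Od\times\Od)\)-norm on \(\ker\diver\) is precisely what makes the coercivity claim hold, and all your constants are manifestly \(\delta\)- and \(\knl\)-independent as required.
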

\begin{proof}
  Trivial.
\end{proof}

\begin{proposition}\label{prop:LBB}
  The bilinear form \(b\) satisfies LBB condition
  \begin{equation}
    \inf_{v \in L^2(\O)}\sup_{\qnl\in \Qd} \frac{b(\qnl,v)}{\|v\|_{L^2(\O)}\|\qnl\|_{\Qd}}
    \geq c >0,
  \end{equation}
  for some constant \(c\) independent from \(\delta \in (0,\overline{\delta})\).
\end{proposition}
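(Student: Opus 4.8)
The plan is to prove the inf-sup bound by picking, for each fixed $v$, a single competitor in the supremum over $\qnl$, namely a $\delta$-uniformly bounded preimage of $v$ under $\diver$ (up to a sign). Such a preimage is precisely what Proposition~\ref{prop:diver0}, point~4, supplies, so the statement is essentially a repackaging of that result.

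Concretely, first I would fix $v \in L^2(\O)$ with $v \neq 0$ and $\delta \in (0,\overline{\delta})$, and let $\qnl_{v,\delta} \in \Qd$ be the field from Proposition~\ref{prop:diver0}, point~4, so that $\diver \qnl_{v,\delta} = v$ and $\|\qnl_{v,\delta}\|_{\Qd} \leq C\|v\|_{L^2(\O)}$ with $C>0$ independent of $v$ and $\delta$. Using $\pnl := -\qnl_{v,\delta}$ as the test field in the definition of $b$ gives $b(-\qnl_{v,\delta},v) = \int_{\O} \diver\qnl_{v,\delta}(x)\,v(x)\dx = \|v\|_{L^2(\O)}^2$, while $\|-\qnl_{v,\delta}\|_{\Qd} = \|\qnl_{v,\delta}\|_{\Qd} \leq C\|v\|_{L^2(\O)}$. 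Hence
\[
\sup_{\qnl\in\Qd}\frac{b(\qnl,v)}{\|v\|_{L^2(\O)}\|\qnl\|_{\Qd}}
\geq \frac{b(-\qnl_{v,\delta},v)}{\|v\|_{L^2(\O)}\|\qnl_{v,\delta}\|_{\Qd}}
\geq \frac{\|v\|_{L^2(\O)}^2}{C\,\|v\|_{L^2(\O)}^2} = \frac{1}{C}.
\]
Since $C$ depends neither on $v$ nor on $\delta$, passing to the infimum over $v \in L^2(\O)\setminus\{0\}$ yields the claim with $c = 1/C$.

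There is no genuine obstacle remaining at this point: all of the substance has been absorbed into Proposition~\ref{prop:diver0}, point~4, whose $\delta$-uniform bounded right inverse for $\diver$ in turn rests on the $\delta$-uniform nonlocal Poincar\'e inequality of Proposition~\ref{prop:grad0}, point~3 --- that is the one genuinely uniform estimate. The LBB constant here is literally the reciprocal of the constant in that right-inverse bound, so the independence of $\delta$ is inherited verbatim. The only minor caveat is that the infimum in the statement must be read over nonzero $v$, which causes no difficulty.
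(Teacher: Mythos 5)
Your proof is correct and is essentially the paper's argument: the paper phrases the bound by first restating $\sup_{\qnl} b(\qnl,v)/\|\qnl\|_{\Qd}$ as $\|\diver^* v\|_{\Qd'}$ and then invoking Proposition~\ref{prop:Poincare_divstar}, but that proposition is itself proved by exactly the competitor $\qnl_{v,\delta}$ from Proposition~\ref{prop:diver0}, point~4, that you use. You have simply inlined the intermediate step, so the two proofs coincide in substance and rest on the same uniform right-inverse bound.
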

\begin{proof}
  We note that
  \[
  \inf_{v \in L^2(\O)}\sup_{\qnl\in \Qd} \frac{b(\qnl,v)}{\|v\|_{L^2(\O)}\|\qnl\|_{\Qd}}
  =
  \inf_{v \in L^2(\O)}\frac{\|\diver^* v\|_{\Qd'}}{\|v\|_{L^2(\O)}}
  \geq C^{-1},
  \]
  owing to Poincar{\'e}-type inequality (see Proposition~\ref{prop:Poincare_divstar}).
\end{proof}

As a consequence of Propositions~\ref{prop:diver0}, \ref{prop:cont}, and~\ref{prop:LBB}
we have the following standard result for mixed variational problems~\cite{boffi2013mixed}.
\begin{theorem}\label{thm:mixed_exist}
  Problem~\eqref{eq:mixedproblem} admits a unique solution \((\qnl,u)\in \Qd\times L^2(\O)\).
  This solution satisfies the stability estimate
  \begin{equation}
    \|\qnl\|_{\Qd} + \|u\|_{L^2(\O)} \le C \|f\|_{L^2(\O)},
  \end{equation}
  for some \(C>0\) independent from \(\qnl\),  \(u\), \(f\), \(\delta \in (0,\overline{\delta})\), or \(\knl \in \kadmd\)
  (but of course dependent on \(\underline{\kappa}\) and \(\overline{\kappa}\)).
\end{theorem}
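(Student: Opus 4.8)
The plan is to recognise Theorem~\ref{thm:mixed_exist} as a direct instance of the classical Brezzi well-posedness theory for mixed (saddle-point) variational problems~\cite{boffi2013mixed}, so that the whole argument reduces to checking its hypotheses --- all of which have in fact already been established, with $\delta$-uniform constants, in the preceding propositions. Concretely, I would set up the abstract framework with $V=\Qd$, which is a Hilbert space by Proposition~\ref{prop:diver0}, point~2, and $W=L^2(\O)$; the two bilinear forms are $\ad(\knl;\cdot,\cdot)$ on $V\times V$ and $b(\cdot,\cdot)$ on $V\times W$; and the data of~\eqref{eq:mixedproblem} is the pair $(0,-\ell)\in V'\times W'$, where $\ell$ is identified with $f\in L^2(\O)=W'$ and where the second equation, a priori posed only for $v\in\Udz$, extends to all of $W$ by density and continuity, as recorded in the footnote to~\eqref{eq:mixedproblem}.

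Next I would verify the three structural hypotheses of Brezzi's theorem. Continuity of $\ad(\knl;\cdot,\cdot)$ and of $b$, together with coercivity of $\ad(\knl;\cdot,\cdot)$ on $\ker\diver$, are exactly Proposition~\ref{prop:cont}; at this point I would record the elementary observation that the abstract kernel $\ker b=\{\pnl\in\Qd:b(\pnl,v)=0\ \forall v\in W\}$ coincides with $\ker\diver$, since $b(\pnl,v)=-(\diver\pnl,v)_{L^2(\O)}$, and that on $\ker\diver$ the graph norm of $\Qd$ reduces to the $L^2(\Od\times\Od)$-norm, so that coercivity of $\ad(\knl;\cdot,\cdot)$ in that norm --- immediate from $\knl\le\overline{\kappa}$ --- is coercivity in the $\Qd$-norm on the kernel, with modulus $\overline{\kappa}^{-1}$. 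The inf-sup condition for $b$ is precisely Proposition~\ref{prop:LBB}. Brezzi's theorem then yields a unique solution $(\qnl,u)\in\Qd\times L^2(\O)$.

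Finally, for the stability estimate I would invoke the quantitative form of Brezzi's theorem, which bounds $\|\qnl\|_{\Qd}+\|u\|_{L^2(\O)}$ by a constant $C$ times the $V'\times W'$-norm of the data, with $C$ an explicit rational expression in the continuity moduli of $\ad(\knl;\cdot,\cdot)$ and $b$, the kernel-coercivity modulus of $\ad(\knl;\cdot,\cdot)$, and the inf-sup constant of $b$. Here the data is $(0,-\ell)$ with $\|\ell\|_{W'}=\|f\|_{L^2(\O)}$, and every one of the structural constants entering $C$ is --- by Propositions~\ref{prop:cont} and~\ref{prop:LBB} --- bounded uniformly over $\delta\in(0,\overline{\delta})$ and $\knl\in\kadmd$ (depending only on $\underline{\kappa}$ and $\overline{\kappa}$), so $C$ inherits the same uniformity, which is the asserted estimate. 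There is no genuine obstacle here: the substance of the theorem lives entirely in the earlier propositions --- in particular in the $\delta$-uniform Poincar{\'e} inequality underlying Proposition~\ref{prop:LBB} --- and the only points needing a moment's care are the identification $\ker b=\ker\diver$ and the observation that the constant furnished by the abstract theory depends on the problem only through those structural constants, so that uniformity in the horizon comes for free rather than requiring a separate argument.
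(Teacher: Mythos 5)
Your proposal is correct and follows exactly the route the paper itself takes: the paper's ``proof'' is nothing more than the citation to Brezzi's theory in \cite{boffi2013mixed}, with Propositions~\ref{prop:diver0}, \ref{prop:cont}, and~\ref{prop:LBB} supplying the hypotheses. You have simply spelled out the verification that the paper leaves implicit --- including the two small but genuine points of care (the identification $\ker b=\ker\diver$, and the fact that the $\Qd$-graph norm collapses to the $L^2(\Od\times\Od)$-norm on that kernel so that the $\overline{\kappa}^{-1}$-coercivity transfers) --- and correctly traced the $\delta$- and $\knl$-uniformity of the stability constant back to the $\delta$-uniform continuity, coercivity, and inf-sup constants.
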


Theorem~\ref{thm:mixed_exist} and the discussion preceding the derivation of~\eqref{eq:mixedproblem}
establish the direct rigorous correspondence between the unique solutions to~\eqref{eq:primalproblem}
and~\eqref{eq:mixedproblem} for heat sources \(f\in L^2(\O)\).
We now note that~\eqref{eq:mixedproblem} is nothing but the system of optimality
conditions for the constrained optimization problem, which can be viewed as
the nonlocal Kelvin variational principle:
\begin{equation}\label{eq:nlkelvin}
    \min_{\qnl\in\Qd(f)} \Id(\knl;\qnl),
\end{equation}
where \(2\Id(\knl;\qnl) = \ad(\knl;\qnl,\qnl)\).
It is easy to check that the optimal value of the problem above, which is
uniquely attained owing to Theorem~\ref{thm:mixed_exist},
equals to \(-\min_{u\in \Udz} \Ip(\knl;u)\).
Therefore, the problem~\eqref{eq:compliance_nl_primal} can be equivalently stated
in terms of nonlocal fluxes, and we thereby arrive at the nonlocal analogue of~\eqref{eq:compliance_local_dual}:
\begin{equation}\label{eq:compliance_nl_mixed}
  d^*_{\delta}=\min_{(\kappa,\qnl) \in \kadmd\times\Qd(f)} \Id(\knl;\qnl).
\end{equation}
The existence of solutions to~\eqref{eq:compliance_nl_mixed} follows immediately
from that for~\eqref{eq:compliance_nl_primal} and the equivalence between the
two problems, or it can be established independently from the convexity considerations,
precisely as we have done in Proposition~\ref{prop:dual_loc_exist}.

\begin{remark}\label{rem:asym}
  Note that the optimal flux in~\eqref{eq:nlkelvin}, and consequently also
  in~\eqref{eq:compliance_nl_mixed}, is anti-symmetric,
  that is, \(\qnl(x,x')+\qnl(x',x) = 0\),
  for almost all \((x,x')\in \Od\times\Od\).
  Indeed, let us split an arbitrary \(\qnl \in L^2(\Od\times\Od)\) into
  its symmetric and assymetric parts, that is,
  \(\qnl^s(x,x')=(\qnl(x,x')+\qnl(x',x))/2\) and
  \(\qnl^a(x,x')=(\qnl(x,x')-\qnl(x',x))/2\).
  Then, for each \(v \in \Udz\), and by continuity also for each \(v\in L^2(\O)\),
  we have the equality
  \begin{equation}
    b(\qnl,v) = (\qnl^s+\qnl^a,\grad v)_{L^2(\Od\times\Od)}
    =(\qnl^a,\grad v)_{L^2(\Od\times\Od)}=b(\qnl^a,v),
  \end{equation}
  owing to the fact that \(\grad v(x,x')=-\grad v(x',x)\).
  Furthermore,
  \begin{equation}
    2\Id(\knl;\qnl) = \underbrace{\ad(\knl;\qnl^s,\qnl^s)}_{\geq 0}+\ad(\knl;\qnl^a,\qnl^a)
    +2\underbrace{\ad(\knl;\qnl^s,\qnl^a)}_{= 0}
    \geq \ad(\knl;\qnl^a,\qnl^a)=2\Id(\knl;\qnl^a),
  \end{equation}
  where we have utilized the assumed symmetry of \(\knl\).
  From these the claim follows immediately.

  We introduce the following notation for the closed  subspaces of
  symmetric and anti-symmetric fluxes:
  \(L^p(\Od\times\Od)\):
  \[\begin{aligned}
    L^2_s(\Od\times\Od) &= \{\, \qnl \in L^2(\Od\times\Od)
  \mid \qnl(x,x')-\qnl(x',x)=0, \text{a.e.\ in \(\Od\times\Od\)}\,\}\quad\text{and}\\
  L^2_a(\Od\times\Od) &= \{\, \qnl \in L^2(\Od\times\Od)
\mid \qnl(x,x')+\qnl(x',x)=0, \text{a.e.\ in \(\Od\times\Od\)}\,\}.\\
  \end{aligned}\]
  Note that we have the identity
  \(L^2(\Od\times\Od) = L^2_s(\Od\times\Od) \oplus L^2_a(\Od\times\Od)\),
  and in fact the subspaces are orthogonal complements of each other.

  In the minimization problem~\eqref{eq:nlkelvin} and the associated
  design problem~\eqref{eq:compliance_nl_mixed} we can replace \(\Qd\) with
  its closed subspace, which is therefore a Hilbert space with respect to the induced
  norm, \(\Qds = \Qd\cap L^2_a(\Od\times\Od)\) without altering the solution.
  We will also utilize the notation \(\Qds(f) = \{\, \qnl \in \Qds \mid \diver \qnl = f\,\}\).
\end{remark}

\section{Flux recovery operator and an upper estimate for the local problem}\label{sec:gammaconv2}

For each \(\qnl \in L^{2}(\Od\times\Od)\) let us define the flux recovery operator
\begin{equation}\label{eq:Rdef}
  R_\delta \qnl(x) = \int_{\Od} (x-x')\qnl(x,x')\Wd(x-x')\dx'.
\end{equation}

\begin{proposition}\label{prop:claim0}
  \(R_\delta \in \mathcal{B}(L^{2}(\Od\times\Od),L^2(\Od;\Rn))\)
  and \(\Idloc(\kappa;R_\delta \qnl)\leq \Id(\knl;\qnl)\),
  \(\forall \qnl \in L^2_a(\Od\times\Od)\)\footnote{Also \(\forall \qnl \in L^2_s(\Od\times\Od)\)} and
  \(\forall \kappa \in \kadmd\), where \(\knl^{-1}(x,x') = [\kappa^{-1}(x)+\kappa^{-1}(x')]/2\).
\end{proposition}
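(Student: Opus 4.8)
The plan is to derive both assertions from a single pointwise-in-\(x\) estimate: for every \(\qnl\in L^2(\Od\times\Od)\),
\[
|R_\delta\qnl(x)|^2 \le \int_{\Od} |\qnl(x,x')|^2\dx', \qquad \text{for a.e.\ } x\in\Od .
\]
Granting this, boundedness follows by integrating over \(\Od\), which gives \(\|R_\delta\qnl\|_{L^2(\Od;\Rn)}\le\|\qnl\|_{L^2(\Od\times\Od)}\) (so in fact \(\|R_\delta\|\le 1\)); and the energy bound follows by multiplying the pointwise estimate by the nonnegative function \(\kappa^{-1}(x)\), integrating over \(\Od\) to get \(\int_\Od\kappa^{-1}(x)|R_\delta\qnl(x)|^2\dx\le\int_\Od\int_\Od\kappa^{-1}(x)|\qnl(x,x')|^2\dx'\dx\), and then symmetrising the right-hand side as explained below.

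The main point — and the only step genuinely requiring care — is obtaining the sharp constant \(1\) in the pointwise estimate. A direct application of the Cauchy--Bunyakovsky--Schwarz inequality to~\eqref{eq:Rdef} with weight \(|x-x'|^2\Wd^2(x-x')\) produces the constant \(\int_{\Rn}|z|^2\Wd^2(z)\,\mathrm{d}z=K_{2,n}^{-1}=n\), which is too large: it yields boundedness but not the energy inequality. Instead, fix \(x\) with \(R_\delta\qnl(x)\neq 0\) (the estimate being trivial otherwise), put \(e=R_\delta\qnl(x)/|R_\delta\qnl(x)|\in\Sn\), and write \(|R_\delta\qnl(x)| = e\cdot R_\delta\qnl(x) = \int_{\Od}\bigl(e\cdot(x-x')\bigr)\qnl(x,x')\Wd(x-x')\dx'\). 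Applying Cauchy--Bunyakovsky--Schwarz with the weight \(\bigl(e\cdot(x-x')\bigr)^2\Wd^2(x-x')\) gives
\[
|R_\delta\qnl(x)|^2 \le \Bigl(\int_{\Od}\bigl(e\cdot(x-x')\bigr)^2\Wd^2(x-x')\dx'\Bigr)\Bigl(\int_{\Od}|\qnl(x,x')|^2\dx'\Bigr).
\]
Since the first integrand is nonnegative, enlarging the domain to \(\Rn\) and substituting \(z=x-x'\) bounds the first factor by \(\int_{\Rn}(e\cdot z)^2\Wd^2(z)\,\mathrm{d}z\); passing to polar coordinates, using that \(\Wd\) is radial, the definition of \(K_{2,n}\), and the normalisation \(\int_{\Rn}|z|^2\Wd^2(z)\,\mathrm{d}z=K_{2,n}^{-1}\), this last integral equals exactly \(1\). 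This proves the pointwise estimate; in particular it shows \(R_\delta\qnl(x)\) is finite for a.e.\ \(x\) and, via Fubini, measurable, so \(R_\delta\qnl\in L^2(\Od;\Rn)\).

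It remains to rewrite the right-hand side of \(\int_\Od\kappa^{-1}(x)|R_\delta\qnl(x)|^2\dx\le\int_\Od\int_\Od\kappa^{-1}(x)|\qnl(x,x')|^2\dx'\dx\). Because \(\qnl\in L^2_a(\Od\times\Od)\) we have \(|\qnl(x,x')|=|\qnl(x',x)|\) a.e., so relabelling \(x\leftrightarrow x'\) gives \(\int_\Od\int_\Od\kappa^{-1}(x)|\qnl(x,x')|^2\dx'\dx=\int_\Od\int_\Od\kappa^{-1}(x')|\qnl(x,x')|^2\dx'\dx\). Averaging the two expressions and recalling \(\knl^{-1}(x,x')=\tfrac12[\kappa^{-1}(x)+\kappa^{-1}(x')]\) identifies the right-hand side with \(\ad(\knl;\qnl,\qnl)=2\Id(\knl;\qnl)\), and dividing by \(2\) yields \(\Idloc(\kappa;R_\delta\qnl)\le\Id(\knl;\qnl)\). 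The argument is insensitive to the sign rule, so it applies verbatim with \(L^2_a\) replaced by \(L^2_s\), which is the content of the footnoted claim. The sharp constant in the pointwise estimate is exactly what the normalisation of \(\Wd\) is tuned to deliver; the naive isotropic bound would not suffice.
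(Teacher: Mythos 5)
Your proof is correct. It reaches the same inequality as the paper but packages the argument differently. The paper first derives the directional estimate \(\int_{\Od}\bigl[\psi(x)\cdot(x-x')\Wd(x-x')\bigr]^2\dx'\le|\psi(x)|^2\) (its equation for the key bound), then applies Cauchy--Bunyakovsky--Schwarz to the pairing \(\int_{\O}\kappa^{-1}(x)\,R_\delta\qnl(x)\cdot\psi(x)\dx\), distributing the weight \(\kappa^{-1}\) between the two factors, and finally specializes \(\psi=R_\delta\qnl\) and divides through by \(\Idloc^{1/2}(\kappa;R_\delta\qnl)\). You instead fold the same directional computation into a pointwise-in-\(x\) CBS, with \(e\) chosen as the direction of \(R_\delta\qnl(x)\), obtaining \(|R_\delta\qnl(x)|^2\le\int_{\Od}|\qnl(x,x')|^2\dx'\); multiplying by \(\kappa^{-1}(x)\), integrating, and symmetrizing in \((x,x')\) then finishes. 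Both routes hinge on the identical computation (polar coordinates, radiality, and the normalization \(\int_{\Rn}|z|^2\Wd^2(z)\,\mathrm{d}z=K_{2,n}^{-1}\)); the symmetrization you carry out explicitly is implicit in the paper's estimate of the first CBS factor, which tacitly requires \(|\qnl(x,x')|=|\qnl(x',x)|\) to obtain the constant \(2\) rather than \(2\sqrt{2}\). Your variant is slightly more transparent and, as a small bonus, gives \(\|R_\delta\|\le 1\), whereas the paper's naive CBS for the first claim only gives \(\|R_\delta\|\le K_{2,n}^{-1/2}=\sqrt{n}\). One cosmetic remark: \(\Idloc\) integrates over \(\O\), so after multiplying by \(\kappa^{-1}\) you need only integrate over \(\O\) rather than \(\Od\) (you actually integrate over \(\Od\) and then discard the nonnegative excess, which is of course fine).
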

\begin{proof}
  We begin by applying Cauchy--Bunyakovsky--Schwarz inequality to obtain the estimate
  \[
  |R_\delta \qnl(x)|^2 %
  \leq \int_{\Od} |\qnl(x,x')|^2 \dx' \int_{\Od} |x-x'|^2\Wd^2(x-x')\dx'
  \leq K_{2,n}^{-1}\int_{\Od} |\qnl(x,x')|^2 \dx',
  \]
  where the last inequality is owing to our assumptions on the kernel \(\Wd(\cdot)\).
  It only remains to integrate both sides with respect to \(x\) to arrive at the first claim.
  
  Let us now take an arbitrary \(\psi \in L^2(\O_\delta;\Rn)\), \(\kappa \in \kadmd\),
  and \(\qnl \in L^2_a(\Od\times\Od)\). 
  We apply Cauchy--Bunyakovsky--Schwarz inequality as follows:
  \[\begin{aligned}
  \int_{\O} \kappa^{-1}(x) R_\delta \qnl(x)\cdot \psi(x)\dx
  &=
  \int_{\O}\int_{\Od} \kappa^{-1}(x) \psi(x)\cdot(x-x')\qnl(x,x')\Wd(x-x')\dx'\dx
  \\&\leq
  \left[\int_{\O}\int_{\Od} \kappa^{-1}(x) |\qnl(x,x')|^2 \dx'\dx\right]^{1/2}
  \left[\int_{\O}\int_{\Od} \kappa^{-1}(x)[\psi(x)\cdot(x-x')\Wd(x-x')]^2 \dx'\dx\right]^{1/2}
  \\&\leq
  2
  \Id^{1/2}(\knl; \qnl)
  \Idloc^{1/2}(\kappa; \psi),
  \end{aligned}\]
  where we have used the fact that for all \(x\in \Od\) we have the inequality
  \begin{equation}\label{eq:BBM0}
    \begin{aligned}
   \int_{\Od}[\psi(x)\cdot(x-x')\Wd(x-x')]^2\dx'
   &\le
    \int_{B(0,\delta)} \bigg[\psi(x)\cdot\frac{z}{|z|}\bigg]^2 |z|^2\Wd^2(z)\,\mathrm{d}z
    \\
    &=\int_0^\delta r^{n-1} r^2 w^2_\delta(r) \left(\int_{\Sn} \left[ \psi(x)\cdot s\right]^2\,\mathrm{d}s \right)\, \mathrm{d}r
    \\&\le |\psi(x)|^2  
    \underbrace{\int_{{\Rn}} |z|^2 w^2_\delta(z) \,\mathrm{d}z}_{=K_{2,n}^{-1}} \underbrace{|\Sn|^{-1}\int_{\Sn} \left[e\cdot s\right]^2\,\mathrm{d}s}_{=K_{2,n}}
    = |\psi(x)|^2,
  \end{aligned}
  \end{equation}
 where \(e\in\Sn\) is an arbitrary unit vector.
 The second claim follows by taking \(\psi = R_\delta \qnl\).
\end{proof}

\begin{proposition}\label{prop:claim1_new}
  Assume that the sequence \(\qnl_{\delta_k} \in \Q_{\delta_k}(f)\), \(\delta_k \in (0,\bar{\delta})\),
  \(\lim_{k\to\infty}\delta_k = 0\), is bounded, and
  \(R_{\delta_k}\qnl_{\delta_k} \rightharpoonup \hat{q}\), weakly
  in \(L^2(\O;\Rn)\).
  Then \(\hat{q} \in \Q(f)\).
\end{proposition}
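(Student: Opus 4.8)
The goal is to show that the weak limit $\hat q$ of the recovered fluxes $R_{\delta_k}\qnl_{\delta_k}$ belongs to $\Q(f) = \{q \in H(\Div,\O;\Rn) \mid \Div q = f\}$. The natural strategy is to test against smooth compactly supported test functions and pass to the limit, checking the distributional identity $\Div \hat q = f$ in $\O$. The key is a compatibility (``consistency'') computation relating the nonlocal divergence constraint $\diver\qnl_{\delta_k} = f$ to the classical divergence of $R_{\delta_k}\qnl_{\delta_k}$, asymptotically as $\delta_k \to 0$.

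Concretely, first I would fix $\phi \in C^\infty_c(\O)$ and consider $\int_\O R_{\delta_k}\qnl_{\delta_k}(x)\cdot\nabla\phi(x)\dx$. Writing out the definition~\eqref{eq:Rdef} of $R_\delta$, this equals $\iint_{\Od\times\Od} \qnl_{\delta_k}(x,x')\,(x-x')\cdot\nabla\phi(x)\,\Wd(x-x')\dx'\dx$. The plan is to compare $(x-x')\cdot\nabla\phi(x)$ with the finite difference $\phi(x)-\phi(x')$: by Taylor's theorem $\phi(x)-\phi(x') = (x-x')\cdot\nabla\phi(x) + O(|x-x'|^2\|\nabla^2\phi\|_\infty)$, and since $\qnl_{\delta_k}$ may as well be taken anti-symmetric (Remark~\ref{rem:asym}), the leading term reproduces, up to sign, the pairing $(\qnl_{\delta_k},\grad\phi)_{L^2(\Od\times\Od)} = -(\diver\qnl_{\delta_k},\phi)_{L^2(\O)} = -(f,\phi)_{L^2(\O)}$. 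The remainder term is controlled by $\|\qnl_{\delta_k}\|_{L^2(\Od\times\Od)}$ times a factor that is $O(\delta_k)$ — this uses the normalization $\int |z|^2\Wd^2(z)\dz = K_{2,n}^{-1}$ together with an extra power of $|x-x'| \le \delta_k$ coming from the quadratic Taylor remainder — and hence vanishes since $\{\qnl_{\delta_k}\}$ is bounded. Passing $k\to\infty$ in $\int_\O R_{\delta_k}\qnl_{\delta_k}\cdot\nabla\phi\dx \to \int_\O \hat q\cdot\nabla\phi\dx$ (weak convergence) then yields $\int_\O \hat q\cdot\nabla\phi\dx = -\int_\O f\phi\dx$ for all $\phi\in C^\infty_c(\O)$, i.e. $\Div\hat q = f$ in the distributional sense; since $f\in L^2(\O)$ this gives $\hat q\in H(\Div,\O;\Rn) = \Q$, hence $\hat q\in\Q(f)$.

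The main obstacle is making the remainder estimate clean and $\delta$-uniform: one must bound $\iint \qnl_{\delta_k}(x,x')\,r_k(x,x')\,\Wd(x-x')\dx'\dx$ where $r_k = \phi(x)-\phi(x') - (x-x')\cdot\nabla\phi(x)$ satisfies $|r_k|\le C_\phi|x-x'|^2$, so by Cauchy--Bunyakovsky--Schwarz the integral is at most $\|\qnl_{\delta_k}\|_{L^2(\Od\times\Od)}\,C_\phi\,\big(\iint_{\Od\times\Od} |x-x'|^4\Wd^2(x-x')\dx'\dx\big)^{1/2}$, and the last factor is bounded by $|\Od|^{1/2}\delta_k\,(\int_{\Rn}|z|^2\Wd^2(z)\dz)^{1/2} = |\Od|^{1/2}\delta_k K_{2,n}^{-1/2} \to 0$. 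A minor bookkeeping point is that $\diver\qnl_{\delta_k}=f$ is an identity on $\O$ only and the pairing with $\grad\phi$ is over $\Od\times\Od$, but since $\phi$ has compact support in $\O$, $\grad\phi(x,x')$ vanishes whenever both $x,x'\notin\O$, so by~\eqref{eq:diver} the pairing still equals $-(f,\phi)_{L^2(\O)}$; and the reduction to anti-symmetric $\qnl_{\delta_k}$ is harmless since $R_\delta$ annihilates nothing we need and $\grad\phi$ is anti-symmetric, so only the anti-symmetric part of $\qnl_{\delta_k}$ contributes to either side.
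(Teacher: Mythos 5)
Your proof is correct and takes essentially the same route as the paper's: test against $\phi \in C^\infty_c(\O)$, use the integration-by-parts identity~\eqref{eq:diver} together with $\diver\qnl_{\delta_k}=f$, control the mismatch between $(x-x')\cdot\nabla\phi(x)$ and $\phi(x)-\phi(x')$ by a second-order Taylor estimate whose $L^2$ size is $O(\delta_k)$ via the normalization of $\Wd$, and pass to the weak limit. One small inaccuracy in your side remark: it is not quite true that ``only the anti-symmetric part of $\qnl_{\delta_k}$ contributes to either side'' --- while the pairing $(\qnl_{\delta_k},\grad\phi)_{L^2(\Od\times\Od)}$ indeed sees only the anti-symmetric part (since $\grad\phi\in L^2_a$), the term $\int_\O R_{\delta_k}\qnl_{\delta_k}\cdot\nabla\phi\dx$ does receive a contribution from $\qnl_{\delta_k}^s$; this contribution is merely $O(\delta_k)$ by the same Taylor bound, which is why the reduction is harmless in the limit --- but this aside is unnecessary for your argument, which never actually invokes anti-symmetry.
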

\begin{proof}
  Let \(\psi \in C^\infty_c(\O)\) be arbitrary.
  Utilizing the definition~\eqref{eq:Rdef} and the second order Taylor series expansion of \(\psi\),
  we obtain the estimate
   \begin{equation*}
    \begin{aligned}
      &\bigg| \int_{\Od} \nabla\psi(x)\cdot R_{\delta}\qnl_\delta(x)\dx + \ell(\psi)\bigg|
      =
      \bigg| \int_{\Od} \nabla\psi(x)\cdot R_{\delta}\qnl_\delta(x)\dx
      +  \int_{\O}\diver \qnl_{\delta}(x) \psi(x)\dx \bigg|
      \\
      &=
      \bigg| \int_{\Od} \nabla\psi(x)\cdot R_{\delta}\qnl_\delta(x)\dx
      -  \int_{\Od}\int_{\Od} \grad \psi(x,x') \qnl_\delta(x,x')\dx'\dx \bigg|
    \\&\leq
    \frac{1}{2}\|\nabla^2\psi\|_{L^\infty(\R^n;\R^{n\times n})}
    \int_{\Od}\int_{\Od}|\qnl_\delta(x,x')||x-x'|^2\Wd(x-x')\dx'\dx
    \\&\leq
    \frac{\delta}{2K_{2,n}^{1/2}}\|\nabla^2\psi\|_{L^\infty(\R^n;\R^{n\times n})}\|\qnl_\delta\|_{L^{2}(\Od\times\Od)},
    \end{aligned}
  \end{equation*}
  where we have utilized the fact that \(\Wd \equiv 0\) for \(|x-x'|>\delta\).
\end{proof}

This preliminary work leads to the following one-sided approximation result.

\begin{proposition}\label{thm:nonlocal_approximation_1sided}
  \(d^* \leq \liminf_{\delta\to 0} d^*_{\delta}\).
\end{proposition}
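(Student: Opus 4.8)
The plan is to establish $d^* \leq \liminf_{\delta \to 0} d^*_\delta$ by extracting a minimizing sequence for the nonlocal problems along a subsequence realizing the liminf, pushing it forward through the recovery operator $R_\delta$, and showing the resulting local pair is admissible with local cost bounded by the limit of the nonlocal costs. Concretely, pick a sequence $\delta_k \to 0$ with $\lim_k d^*_{\delta_k} = \liminf_{\delta\to 0} d^*_\delta$; if this liminf is $+\infty$ there is nothing to prove, so assume it is finite. For each $k$ choose a near-optimizer $(\kappa_{\delta_k}, \qnl_{\delta_k}) \in \kadmd[\delta_k] \times \Qds[\delta_k](f)$ of \eqref{eq:compliance_nl_mixed} with $\Id(\knl_{\delta_k}; \qnl_{\delta_k}) \leq d^*_{\delta_k} + 1/k$ (using the anti-symmetric reduction from Remark~\ref{rem:asym}, so that Proposition~\ref{prop:claim0} applies). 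Since $\underline{\kappa} \leq \knl_{\delta_k} \leq \overline{\kappa}$, boundedness of the costs gives a uniform bound on $\|\qnl_{\delta_k}\|_{L^2(\Od[\delta_k]\times\Od[\delta_k])}$, hence on $R_{\delta_k}\qnl_{\delta_k}$ in $L^2(\O;\Rn)$ by Proposition~\ref{prop:claim0}; also $\kappa_{\delta_k}$ restricted to $\O$ is bounded in $L^\infty(\O)$.

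Next I would pass to a further subsequence (not relabelled) so that $\kappa_{\delta_k}|_\O \wsto \kappa$ weak-$*$ in $L^\infty(\O)$ — the constraint set $\kadm$ being weak-$*$ closed, so $\kappa \in \kadm$ — and $R_{\delta_k}\qnl_{\delta_k} \wto \hat q$ weakly in $L^2(\O;\Rn)$. By Proposition~\ref{prop:claim1_new}, $\hat q \in \Q(f)$, so $(\kappa, \hat q) \in \kadm \times \Q(f)$ is a feasible pair for the local dual problem \eqref{eq:compliance_local_dual}, whence $d^* \leq \Idloc(\kappa; \hat q)$. It then remains to show the lower-semicontinuity-type inequality
\[
  \Idloc(\kappa; \hat q) \leq \liminf_{k\to\infty} \Id(\knl_{\delta_k}; \qnl_{\delta_k}),
\]
which combined with $\Id(\knl_{\delta_k};\qnl_{\delta_k}) \leq d^*_{\delta_k} + 1/k$ and $d^*_{\delta_k} \to \liminf_\delta d^*_\delta$ closes the argument.

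For the key inequality, Proposition~\ref{prop:claim0} already gives $\Idloc(\kappa_{\delta_k}; R_{\delta_k}\qnl_{\delta_k}) \leq \Id(\knl_{\delta_k}; \qnl_{\delta_k})$ pointwise in $k$ (taking $\knl_{\delta_k}$ to be the harmonic average of $\kappa_{\delta_k}$, so that $\knl_{\delta_k}^{-1}$ is the arithmetic average of the resistivities as required there). So it suffices to prove
\[
  \Idloc(\kappa; \hat q) \leq \liminf_{k\to\infty} \Idloc(\kappa_{\delta_k}; R_{\delta_k}\qnl_{\delta_k}).
\]
This is a joint weak-lower-semicontinuity statement for the functional $(\kappa, q) \mapsto \frac12\int_\O \kappa^{-1}|q|^2$ under $\kappa_{\delta_k} \wsto \kappa$ in $L^\infty$ (equivalently weakly in every $L^p(\O)$, $\O$ bounded) and $R_{\delta_k}\qnl_{\delta_k} \wto \hat q$ in $L^2$. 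The integrand $\R_+ \times \Rn \ni (\kappa, q) \mapsto \kappa^{-1}|q|^2$ is jointly convex and nonnegative, so this follows exactly by the Mazur-lemma-plus-Fatou device used in the proof of Proposition~\ref{prop:dual_loc_exist}: extract a sequence of joint convex combinations converging strongly (hence, along a further subsequence, a.e.) in $L^p(\O)\times L^2(\O;\Rn)$ to $(\kappa, \hat q)$, and apply Fatou together with convexity of the integrand to each convex combination.

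The main obstacle is purely bookkeeping rather than conceptual: one must keep track of the fact that the conductivities $\kappa_{\delta_k}$ live on the growing domains $\Od[\delta_k]$ while the limiting statement lives on $\O$, and ensure the harmonic-average structure $\knl_{\delta_k}^{-1} = [\kappa_{\delta_k}^{-1} + \kappa_{\delta_k}^{-1}(\cdot')]/2$ is the one used so that Proposition~\ref{prop:claim0} applies verbatim. There is also a mild subtlety that $R_{\delta_k}\qnl_{\delta_k}$ is a priori defined on $\Od[\delta_k]$; one restricts it to $\O$ before passing to the weak limit, which is legitimate since Proposition~\ref{prop:claim1_new} only tests against $\psi \in C^\infty_c(\O)$. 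Everything else — weak-$*$ compactness of $\kadm$, the recovery bound, membership $\hat q \in \Q(f)$, and the convexity-based lower semicontinuity — is supplied by the propositions already proved.
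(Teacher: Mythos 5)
Your proof is correct and follows essentially the same route as the paper: extract the subsequence realizing the liminf, use weak\(^*\) compactness of \(\kadm\) and the uniform bound on the fluxes, push the fluxes forward via \(R_\delta\) using Propositions~\ref{prop:claim0} and~\ref{prop:claim1_new} to obtain a feasible local pair, and conclude by joint convexity via the Mazur--Fatou device from Proposition~\ref{prop:dual_loc_exist}. The only cosmetic difference is your use of \(1/k\)-near-optimizers, whereas the paper takes exact optimal solutions (already shown to exist); your unpacking of which norm bound feeds where, and the domain restriction \(\Od\to\O\), is a faithful and slightly more explicit rendering of what the paper's terse argument relies on.
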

\begin{proof}
  Let \((\kappa_{\delta},\qnl_\delta)\in \kadmd\times \Qd^a(f)\) be a sequence of optimal
  solutions to~\eqref{eq:compliance_nl_mixed}. 
  We note that \(\kadm\) is weak\(^*\) sequentially compact in \(L^\infty(\O)\) and the uniform bound on \(\qnl_{\delta}\) given by Theorem~\ref{thm:mixed_exist}.  Therefore, in view of Propositions~\ref{prop:claim0} and~\ref{prop:claim1_new}, we can assume that
  for some sequence \(\delta_k\searrow 0\) we have \(\kappa_{\delta_k} \wsto \kappa\in \kadm\), weakly\(^*\)
  in \(L^\infty(\O)\) and \(R_{\delta_k} \qnl_{\delta_k} \wto q \in \Q(f)\), weakly in \(L^2(\O)\).
  Appealing to Proposition~\ref{prop:claim0}, convexity of \(\Idloc\), Mazour's and Fatou's lemmata as in the proof of Proposition~\ref{prop:dual_loc_exist} we arrive at the desired claim.
\end{proof}

\section{Adjoint flux recovery operator and a lower estimate for the local problem}

Let us define the operator
\begin{equation}
    R^*_{\delta}q(x,x')
    = \frac{1}{2}[q(x)+q(x')]\cdot (x-x')\Wd(x-x')
\end{equation}
As the notation suggests, \(R^*_{\delta} \in \mathcal{B}(L^2(\Od;\Rn),L^2_a(\Od\times\Od))\)
is the Hilbert space adjoint of \(R_{\delta}\) given by~\eqref{eq:Rdef}, when the latter is restricted to \(L^2_a(\Od\times\Od)\).

\begin{proposition}\label{prop:rstar1}
Assume that \(q\in C^2_c(\Rn;\Rn)\).
Then \(R^*_{\delta}q\in \Qd\) and 
\[\diver R^*_{\delta}q = \text{p.v.}\int_{B(0,\delta)} [q(x+z)\cdot z \Wd^2(z)]\,\mathrm{d}z
\to \Div q,\]
as \(\delta\searrow 0\), pointwise and strongly in \(L^2(\O)\).
\end{proposition}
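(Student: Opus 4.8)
The plan is to verify directly that $R_\delta^*q$ lies in $\Qd$ by exhibiting its nonlocal divergence and computing it explicitly, then to perform a Taylor expansion to extract the limit. First I would recall that $R_\delta^* q \in L^2_a(\Od\times\Od)$ by the adjointness already noted, so it is an admissible candidate, and I would use the defining integration-by-parts identity~\eqref{eq:diver}: for every test function $v\in\Udz$,
\begin{equation*}
  -(R_\delta^* q,\grad v)_{L^2(\Od\times\Od)} = -\langle R_\delta q, \nabla v\rangle_{L^2(\Od;\Rn)},
\end{equation*}
since $R_\delta^*$ is the Hilbert-space adjoint of $R_\delta$ (restricted to anti-symmetric fluxes) and, for such smooth $v$, the pairing of $R_\delta q$ against $\grad v$ reproduces the pairing of $q$ against $\nabla v$ up to the action of $R_\delta$. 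Then I would integrate by parts in the \emph{local} sense on the right-hand side, using $q\in C^2_c(\Rn;\Rn)$ and that $v$ vanishes outside $\O$, to rewrite $-\langle R_\delta q,\nabla v\rangle = \langle \Div(R_\delta q)\dots\rangle$; more carefully, I would instead unwind the double integral $\iint R_\delta^* q(x,x')\,\grad v(x,x')$ by symmetrizing in $(x,x')$ exactly as in the proof of Proposition~\ref{prop:diver0}, point~1, which produces $\int_\O v(x)\big[\int_{\Od}(R_\delta^*q(x',x) - R_\delta^*q(x,x'))\Wd(x-x')\dx'\big]\dx$. Using the anti-symmetry $R_\delta^*q(x',x) = -R_\delta^*q(x,x')$ this inner bracket becomes $-2\int_{\Od}R_\delta^*q(x,x')\Wd(x-x')\dx' = -\int_{\Od}[q(x)+q(x')]\cdot(x-x')\Wd^2(x-x')\dx'$. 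Because $\Wd$ is radial, $\int_{B(0,\delta)} z\,\Wd^2(z)\dz = 0$, so the $q(x)$ term drops and, after the change of variables $z = x'-x$ (noting $(x-x') = -z$ cancels a sign), the bracket equals $\mathrm{p.v.}\int_{B(0,\delta)}q(x+z)\cdot z\,\Wd^2(z)\dz$. This identifies a function in $L^2(\O)$ (boundedness follows from $q\in C^2_c$ and $\int|z|^2\Wd^2 = K_{2,n}^{-1}<\infty$, mimicking the estimate in Proposition~\ref{prop:diver0}), hence $R_\delta^*q\in D(\diver)=\Qd$ with $\diver R_\delta^* q$ given by that principal-value integral.

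For the limit as $\delta\searrow 0$, I would Taylor-expand $q(x+z) = q(x) + \nabla q(x)z + O(|z|^2\|\nabla^2 q\|_\infty)$. The zeroth-order term contributes $q(x)\cdot\int_{B(0,\delta)}z\,\Wd^2(z)\dz = 0$ by radial symmetry. The first-order term gives $\int_{B(0,\delta)}(\nabla q(x)z)\cdot z\,\Wd^2(z)\dz = \sum_{i,j}\partial_j q_i(x)\int_{B(0,\delta)} z_i z_j\Wd^2(z)\dz$; by radial symmetry the off-diagonal ($i\neq j$) integrals vanish and the diagonal ones all equal $\tfrac1n\int_{B(0,\delta)}|z|^2\Wd^2(z)\dz = \tfrac1n K_{2,n}^{-1}$. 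One checks $\tfrac1n K_{2,n}^{-1} = 1$ from the identity $|\Sn|^{-1}\int_{\Sn}(e\cdot s)^2\ds = 1/n$ (equivalently, this is the normalization already built into $K_{2,n}$ via the computation in~\eqref{eq:BBM0}), so the first-order term is exactly $\sum_i\partial_i q_i(x) = \Div q(x)$. The remainder is bounded by $C\|\nabla^2 q\|_\infty\int_{B(0,\delta)}|z|^3\Wd^2(z)\dz \le C\delta\|\nabla^2 q\|_\infty\int|z|^2\Wd^2 = C\delta K_{2,n}^{-1}\|\nabla^2 q\|_\infty \to 0$ pointwise and, since $\supp q$ is compact and the bound is uniform in $x$, also strongly in $L^2(\O)$ (dominated convergence, or simply the uniform $O(\delta)$ bound on a bounded domain).

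The main obstacle is the bookkeeping in the first step: making the symmetrization/integration-by-parts argument rigorous to establish that $R_\delta^*q$ genuinely lies in the domain of the adjoint operator $\diver$ and that its divergence equals the claimed principal-value integral, rather than merely computing a formal expression. The cleanest route is to avoid re-deriving the adjoint by hand and instead note that $\langle \diver R_\delta^* q, v\rangle = -\langle R_\delta^* q,\grad v\rangle = -\langle q, R_\delta \grad v\rangle$ for all $v\in\Udz$, then recognize $R_\delta\grad v(x) = \int_{\Od}(x-x')[v(x)-v(x')]\Wd^2(x-x')\dx'$ as (twice) a known nonlocal-gradient-type object whose pairing with the smooth compactly supported $q$ can be handled by Fubini and the same radial cancellation, yielding $\int_\O v(x)\,\mathrm{p.v.}\!\int_{B(0,\delta)}q(x+z)\cdot z\,\Wd^2(z)\dz\,\dx$; the $L^2$ bound on the inner integral (uniform in $\delta$, via Cauchy--Bunyakovsky--Schwarz and the kernel normalization) then certifies membership in $\Qd$. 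The convergence part is routine once the identity for $\diver R_\delta^* q$ is in hand.
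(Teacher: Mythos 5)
Your proposal is correct and follows essentially the same route as the paper's proof: expand \((R_\delta^*q,\grad u)_{L^2(\Od\times\Od)}\), symmetrize in \((x,x')\) and apply Fubini together with the radial cancellation \(\int_{B(0,\delta)}z\,\Wd^2(z)\,\mathrm{d}z=0\) to isolate \(\mathrm{p.v.}\int_{B(0,\delta)}q(x+z)\cdot z\,\Wd^2(z)\,\mathrm{d}z\), then Taylor-expand \(q(x+z)\) to get both the uniform-in-\(x\) bound certifying \(R_\delta^*q\in\Qd\) and the \(O(\delta\|\nabla^2 q\|_\infty)\) convergence to \(\Div q\) (your \(K_{2,n}=1/n\) normalization check matches the paper's Kronecker-delta computation). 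The one place to tighten is the Fubini/splitting step: since \(\Wd^2\) alone need not be integrable, one cannot integrate the \(q(x)\) and \(q(x')\) (equivalently \(v(x)\) and \(v(x')\)) pieces separately on \(\Od\times\Od\); the paper first truncates to \(\{|x-x'|>\epsilon\}\) using continuity of the Lebesgue integral, then applies Fubini and the radial cancellation at fixed \(\epsilon\), and recovers the principal value in the limit \(\epsilon\searrow 0\) --- your ``\(\mathrm{p.v.}\)'' records the same idea, but the intermediate manipulations should be carried out at fixed \(\epsilon\).
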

\begin{proof}
Clearly \(R^*_{\delta}q\in L^2_a(\Od\times\Od)\).
Let us take an arbitrary \(u\in \Udz\). Then
\begin{equation}\label{eq:pvdiv}
\begin{aligned}
(R^*_{\delta}q, \grad u)_{L^2(\Od\times\Od)}
&=
\frac{1}{2}\int_{\Od}\int_{\Od} [u(x)-u(x')][q(x)+q(x')]\cdot (x-x')
\Wd^2(x-x')\dx'\dx
\\&=
\lim_{\epsilon\searrow 0}
\frac{1}{2}\iint\limits_{\Od\times\Od\cap \{\, |x-x'|>\epsilon\,\}}
[u(x)-u(x')][q(x)+q(x')]\cdot (x-x')
\Wd^2(x-x')\dx'\dx
\\&=
-\lim_{\epsilon\searrow 0}
\int_{\O}u(x)\int_{B(0,\delta)\setminus B(0,\epsilon)} [q(x)+q(x+z)]\cdot z\Wd^2(z)\,\mathrm{d}z\dx,
\end{aligned}
\end{equation}
where we have utilized the continuity of Lebesgue's integral and Fubuni's theorem, see for example~\cite[Theorem~4.5]{brezis2010functional}.
We note that for each \(x\in \O\) and each \(\epsilon>0\), we have the identity
\[\int_{B(0,\delta)\setminus B(0,\epsilon)} q(x)\cdot z\Wd^2(z)\,\mathrm{d}z=0,\]
owing to the radial nature of \(\Wd\).
In view of this identity, for each \(x\in \O\) and each \(\epsilon>0\) we can utilize Taylor's theorem to expand \(q(x+z)\) around \(x\) and obtain the following estimate:
\[
\bigg|\int_{B(0,\delta)\setminus B(0,\epsilon)} [q(x+z)\cdot z \Wd^2(z)]\,\mathrm{d}z\bigg|
\leq
\|\nabla q\|_{L^\infty(\Rn,\R^{n\times n})} \int_{B(0,\delta)\setminus B(0,\epsilon)} |z|^2 \Wd^2(z)\,\mathrm{d}z
\leq K_{2,n}^{-1} \|\nabla q\|_{L^\infty(\Rn,\R^{n\times n})}.
\]
Therefore, we can utilize Lebesgue's dominated convergence theorem, see for example~\cite[Theorem~4.2]{brezis2010functional}, in~\eqref{eq:pvdiv} to conclude that
\[
x\mapsto\diver R^*_{\delta}q(x) = \text{p.v.}\int_{B(0,\delta)} q(x+z)\cdot z \Wd^2(z)\,\mathrm{d}z,
\]
is in \(L^2(\O)\), and consequently \(R^*_{\delta}q \in \Qd^a\).
Using one order higher Taylor series expansion of \(q(x+z)\), we can see that
\begin{equation*}\begin{aligned}
\diver R^*_{\delta}q(x) + O(\delta \|\nabla^2 q\|_{L^\infty(\Rn,\R^{n\times n\times n})})
&=
\lim_{\epsilon\searrow 0}
\int_{B(0,\delta)\setminus B(0,\epsilon)} [z^T \nabla q(x)\cdot z]  \Wd^2(z)\,\mathrm{d}z
\\&=
\sum_{i,j=1}^n
\frac{\partial q_i}{\partial x_j}(x)
\int_{B(0,\delta)} [z\cdot e_i][z\cdot e_j]  \Wd^2(z)\,\mathrm{d}z
= \Div q(x),
\end{aligned}\end{equation*}
where \(e_1,\dots,e_n\) is the standard basis in \(\Rn\).
Note that the integrals on the last line evaluate to the Kroneker symbol \(\delta_{ij}\) owing to the radial nature and normalization of \(\Wd\), in the same fashion as in~\eqref{eq:BBM0}.
\end{proof}

\begin{proposition}\label{prop:rstar2}
Assume that \(q\in C^1_c(\Rn;\Rn)\) and \(\kappa \in \kadmd\).
Then
\[
\lim_{\delta\searrow 0}
\Id(\knl;R^*_{\delta}q)= \Idloc(\kappa;q).
\]
\end{proposition}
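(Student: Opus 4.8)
The plan is to evaluate $\Id(\knl;R^*_{\delta}q)$ explicitly and then pass to the limit by dominated convergence. Recall $2\Id(\knl;\qnl)=\ad(\knl;\qnl,\qnl)$, that $R^*_{\delta}q$ is anti-symmetric (so $(x,x')\mapsto|R^*_{\delta}q(x,x')|^2$ is symmetric), and that $\knl^{-1}(x,x')=\tfrac12[\kappa^{-1}(x)+\kappa^{-1}(x')]$ is symmetric as well. Since $R^*_{\delta}q\in L^2(\Od\times\Od)$, Fubini's theorem combined with the symmetry gives
\begin{equation*}
  2\Id(\knl;R^*_{\delta}q)=\int_{\Od}\int_{\Od}\kappa^{-1}(x)\,|R^*_{\delta}q(x,x')|^2\dx'\dx=\int_{\Od}\kappa^{-1}(x)\,F_{\delta}(x)\dx,\qquad F_{\delta}(x):=\int_{\Od}|R^*_{\delta}q(x,x')|^2\dx'.
\end{equation*}
It therefore suffices to prove $\int_{\Od}\kappa^{-1}F_{\delta}\dx\to\int_{\O}\kappa^{-1}|q|^2\dx=2\Idloc(\kappa;q)$ as $\delta\searrow0$.

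First I would pin down the pointwise limit of $F_{\delta}$ on $\O$. For $x\in\O$ we have $B(x,\delta)\subseteq\Od$, and since $\Wd$ is supported in $B(0,\delta)$, the substitution $z=x-x'$ gives $F_{\delta}(x)=\tfrac14\int_{B(0,\delta)}|[q(x)+q(x-z)]\cdot z|^2\Wd^2(z)\,\mathrm{d}z$. Writing $[q(x)+q(x-z)]\cdot z=2\,q(x)\cdot z+[q(x-z)-q(x)]\cdot z$, using $|q(x-z)-q(x)|\le\|\nabla q\|_{L^\infty}|z|$, and invoking the normalization identity $\int_{B(0,\delta)}(q(x)\cdot z)^2\Wd^2(z)\,\mathrm{d}z=|q(x)|^2$ — which is exactly the computation displayed in the chain~\eqref{eq:BBM0} with $\psi=q(x)$ — one obtains $\bigl|F_{\delta}(x)-|q(x)|^2\bigr|\le C\,\delta$ with $C$ depending only on $\|q\|_{L^\infty}$, $\|\nabla q\|_{L^\infty}$ and $K_{2,n}$, uniformly in $x\in\O$. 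Separately, the crude bound $|[q(x)+q(x-z)]\cdot z|\le2\|q\|_{L^\infty}|z|$ yields $F_{\delta}(x)\le\|q\|_{L^\infty}^2K_{2,n}^{-1}$ for every $x\in\Od$ and every $\delta\in(0,\overline{\delta})$, so the integrand $\kappa^{-1}(x)F_{\delta}(x)$ is dominated by the constant $\underline{\kappa}^{-1}\|q\|_{L^\infty}^2K_{2,n}^{-1}\in L^1(\O)$.

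Finally I would split $\Od=\O\cup\Gamma_{\delta}$. The contribution of $\Gamma_{\delta}$ is bounded by $\underline{\kappa}^{-1}\|q\|_{L^\infty}^2K_{2,n}^{-1}|\Gamma_{\delta}|$, which tends to $0$ because $|\Gamma_{\delta}|=|\Od|-|\O|\to0$ as $\delta\searrow0$ (the Lipschitz boundary $\partial\O$ is Lebesgue-null). On $\O$, the pointwise (indeed uniform) convergence $F_{\delta}\to|q|^2$ together with the $L^1(\O)$ bound lets the dominated convergence theorem give $\int_{\O}\kappa^{-1}F_{\delta}\dx\to\int_{\O}\kappa^{-1}|q|^2\dx$. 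Adding the two contributions yields $2\Id(\knl;R^*_{\delta}q)\to2\Idloc(\kappa;q)$. I do not expect a genuine obstacle: the only delicate points are the Fubini exchange (licit since $R^*_{\delta}q\in L^2(\Od\times\Od)$, as already recorded before the proposition) and the bookkeeping on the nonlocal collar $\Gamma_{\delta}$, handled by the vanishing-measure estimate; the core limit $F_{\delta}(x)\to|q(x)|^2$ is the standard Bourgain--Brezis--Mironescu-type identity, which is elementary here because $q\in C^1_c$ and $\Wd$ is normalized as in~\eqref{eq:BBM0}.
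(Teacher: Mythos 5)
Your proof is correct and takes essentially the same approach as the paper: reduce via the symmetry of $\knl^{-1}$ and $|R^*_{\delta}q|^2$ and Fubini to an integral against $\kappa^{-1}(x)$, then evaluate the inner integral by Taylor expansion and the normalization of $\Wd$, with a separate (vanishing) estimate on the collar $\Od\setminus\O$. The paper decomposes the square into three terms $I_1,I_2,I_3$ and estimates each, whereas you factor out the leading term $2\,q(x)\cdot z$ first and bound the remainder all at once, but this is the same computation in slightly different bookkeeping.
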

\begin{proof}
We proceed with the direct computation.
\[\begin{aligned}
\int_{\Od}\int_{\Od} \knl^{-1}(x,x')|R^*_{\delta}q(x,x')|^2 \dx'\dx
&=
\int_{\Od}\kappa^{-1}(x)\int_{\Od} |R^*_{\delta}q(x,x')|^2\dx'\dx
\\&=
\frac{1}{4}
\int_{\Od} \kappa^{-1}(x)\int_{\Od}|q(x)\cdot(x-x')|^2 \Wd^2(x-x')\dx'\dx
\\&+
\frac{1}{4}
\int_{\Od} \kappa^{-1}(x)\int_{\Od}|q(x')\cdot(x-x')|^2 \Wd^2(x-x')\dx'\dx
\\&+
\frac{1}{2}
\int_{\Od} \kappa^{-1}(x)\int_{\Od} [q(x)\cdot(x-x')][q(x')\cdot(x-x')]\Wd^2(x-x')\dx'\dx
\\
&= \frac{1}{4}I_1 + \frac{1}{4}I_2 + \frac{1}{2}I_3.
\end{aligned}
\]
As before, for each \(x\in \O\), we have
\[
\int_{\Od}|q(x)\cdot(x-x')|^2 \Wd^2(x-x')\dx' = |q(x)|^2,
\]
and consequently \(I_1 = 2\Idloc(\kappa;q) + \int_{\Od\setminus \O} \kappa^{-1}(x)\int_{\Od}|q(x')\cdot(x-x')|^2 \Wd^2(x-x')\dx'\dx 
= 2\Idloc(\kappa;q) + O(\delta)\), with proportionality constant depending on \(n\), \(\underline{\kappa}\), \(\|q\|_{L^\infty(\Rn;\R^{n})}\),
but independent from small \(\delta\).
Furthermore, using Taylor series expansion, we can compute
\[
\int_{\Od}|q(x')\cdot(x-x')|^2 \Wd^2(x-x')\dx' = |q(x)|^2 + O(\delta),
\]
with the proportionality constant depending on \(\|\nabla q\|_{L^\infty(\Rn;\R^{n\times n})}\),
and consequently also \(I_2 = 2\Idloc(\kappa;q) + O(\delta)\).
Finally, using the same arguments we have
\[
\int_{\Od}[q(x')\cdot(x-x')][q(x)\cdot(x-x')] \Wd^2(x-x')\dx' = |q(x)|^2 + O(\delta),
\]
and \(I_3 = 2\Idloc(\kappa;q) + O(\delta)\).
In summary, we can conclude that
\[
\lim_{\delta\searrow 0}
\Id(\knl;R^*_{\delta}q)= \Idloc(\kappa;q).\qedhere
\]
\end{proof}

\begin{proposition}\label{thm:nonlocal_approximation_2sided}
Assume that \(\kappa \in \kadmd\) and \(q\in \Q(f)\) are given.
Then there is a sequence \(\pnl_{\delta} \in \Qd(f)\), such that
\(\limsup_{\delta\searrow 0} \Id(\knl;\pnl_{\delta}) \leq
\Idloc(\kappa;q)\).
In particular,
\(\limsup_{\delta\searrow 0} d^*_{\delta} \leq d^*\).
\end{proposition}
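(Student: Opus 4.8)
The plan is to reduce the ``in particular'' part to the main recovery statement, and to prove the latter by a three-step construction: regularize \(q\), push the regularization through the adjoint recovery operator \(R^*_\delta\), and then correct the (asymptotically vanishing) divergence defect using the \(\delta\)-uniformly bounded right inverse of \(\diver\) from Proposition~\ref{prop:diver0}(4). For the reduction: by Proposition~\ref{prop:dual_loc_exist} the local dual problem is solved by some \((\kappa^*,q^*)\in\kadm\times\Q(f)\) with \(\Idloc(\kappa^*;q^*)=d^*\); I would fix once and for all an extension of \(\kappa^*\) to \(\O_{\overline{\delta}}\) with values in \([\underline{\kappa},\overline{\kappa}]\) (so that \(\knl\) is unambiguously defined on \(\Od\times\Od\) and lies in \(\kadmd\) for every \(\delta<\overline{\delta}\)), apply the recovery statement to this \(\kappa\) and \(q=q^*\) to obtain \(\pnl_\delta\in\Qd(f)\), and note that \((\kappa,\pnl_\delta)\) is feasible for~\eqref{eq:compliance_nl_mixed}, so \(d^*_\delta\le\Id(\knl;\pnl_\delta)\); taking \(\limsup_{\delta\searrow0}\) and using that \(\Idloc\) integrates only over \(\O\) (where the extension agrees with \(\kappa^*\)) gives \(\limsup_{\delta\searrow0}d^*_\delta\le\Idloc(\kappa;q^*)=\Idloc(\kappa^*;q^*)=d^*\).

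For the recovery statement itself, I would first regularize: since \(\O\) is bounded Lipschitz, smooth compactly supported vector fields are dense in \(\Q=H(\Div,\O;\Rn)\), so I pick \(q_j\in C^\infty_c(\Rn;\Rn)\) with \(q_j\to q\) in \(L^2(\O;\Rn)\) and \(\Div q_j\to\Div q=f\) in \(L^2(\O)\). For fixed \(j\), Proposition~\ref{prop:rstar1} gives \(R^*_\delta q_j\in\Qd\) with \(\diver R^*_\delta q_j\to\Div q_j\) strongly in \(L^2(\O)\), and Proposition~\ref{prop:rstar2} gives \(\Id(\knl;R^*_\delta q_j)\to\Idloc(\kappa;q_j)\). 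Since \(\diver R^*_\delta q_j\ne f\) in general, I correct by setting \(\pnl^{(j)}_\delta=R^*_\delta q_j+\qnl_{v_{\delta,j},\delta}\) with \(v_{\delta,j}=f-\diver R^*_\delta q_j\in L^2(\O)\) and \(\qnl_{\cdot,\delta}\) the right inverse from Proposition~\ref{prop:diver0}(4), which satisfies \(\diver\qnl_{v,\delta}=v\) and \(\|\qnl_{v,\delta}\|_{\Qd}\le C\|v\|_{L^2(\O)}\) with \(C\) independent of \(\delta\). Then \(\pnl^{(j)}_\delta\in\Qd\) and \(\diver\pnl^{(j)}_\delta=f\), so \(\pnl^{(j)}_\delta\in\Qd(f)\). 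Writing \(\|\cdot\|_{\ad}=\ad(\knl;\cdot,\cdot)^{1/2}\) (a norm equivalent to the \(L^2(\Od\times\Od)\)-norm, uniformly in \(\delta\) and \(\knl\), by the blanket bounds on \(\knl\)), the triangle inequality and the bound on the corrector give
\[
\limsup_{\delta\searrow0}\Id(\knl;\pnl^{(j)}_\delta)
\le\tfrac12\Bigl[(2\Idloc(\kappa;q_j))^{1/2}+\underline{\kappa}^{-1/2}C\,\|\Div q-\Div q_j\|_{L^2(\O)}\Bigr]^2,
\]
using \(\limsup_{\delta\searrow0}\|v_{\delta,j}\|_{L^2(\O)}=\|f-\Div q_j\|_{L^2(\O)}\).

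Letting \(j\to\infty\), the defect \(\|\Div q-\Div q_j\|_{L^2(\O)}\to0\) and \(\Idloc(\kappa;q_j)=\tfrac12\int_\O\kappa^{-1}|q_j|^2\to\Idloc(\kappa;q)\) (because \(\kappa^{-1}\) is bounded and \(q_j\to q\) in \(L^2(\O;\Rn)\)), so \(\limsup_{j\to\infty}\limsup_{\delta\searrow0}\Id(\knl;\pnl^{(j)}_\delta)\le\Idloc(\kappa;q)\). A standard diagonal selection then finishes the construction: for each \(m\) pick \(j_m\) with \(\limsup_{\delta\searrow0}\Id(\knl;\pnl^{(j_m)}_\delta)\le\Idloc(\kappa;q)+\tfrac1{2m}\) and then \(\delta_m\in(0,\overline{\delta})\), decreasing to \(0\), such that \(\Id(\knl;\pnl^{(j_m)}_\delta)\le\Idloc(\kappa;q)+\tfrac1m\) for \(\delta\in(0,\delta_m)\), and set \(\pnl_\delta=\pnl^{(j_m)}_\delta\) for \(\delta\in[\delta_{m+1},\delta_m)\); this \(\pnl_\delta\in\Qd(f)\) satisfies \(\limsup_{\delta\searrow0}\Id(\knl;\pnl_\delta)\le\Idloc(\kappa;q)\).

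The step I expect to be the main obstacle is the divergence correction: it is precisely here that the \emph{\(\delta\)-uniform} bound on the right inverse of \(\diver\) (equivalently, the uniform LBB/Poincar\'e estimate established in Proposition~\ref{prop:diver0}(4) and Proposition~\ref{prop:Poincare_divstar}) is indispensable — without uniformity the corrector's norm could blow up as \(\delta\searrow0\) and the \(\limsup\) estimate would be lost. The regularization of \(q\), the passage to the limit in the quadratic functionals, and the diagonalization are all routine by comparison; the only other point requiring a word of care is fixing a \(\delta\)-independent extension of \(\kappa\) off \(\O\), which is harmless since \(\Idloc\) depends on \(\kappa\) only through its restriction to \(\O\).
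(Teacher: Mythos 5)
Your proof is correct, but it takes a genuinely different route from the paper's.  The paper chooses \(\pnl_\delta\) to be the minimizer of the nonlocal Kelvin principle~\eqref{eq:nlkelvin} in \(\Qd(f)\) (so \(\pnl_\delta\) does not depend on the regularization parameter), and then compares it to the minimizer \(\rnl_{\delta,\epsilon}\) of the same principle with perturbed right-hand side \(f_{\delta,\epsilon}=\diver R^*_\delta q_\epsilon\): the stability estimate of Theorem~\ref{thm:mixed_exist} bounds \(\|\pnl_\delta-\rnl_{\delta,\epsilon}\|_{\Qd}\) by \(C\|f-f_{\delta,\epsilon}\|_{L^2(\O)}\), and optimality of \(\rnl_{\delta,\epsilon}\) in \(\Qd(f_{\delta,\epsilon})\) gives \(\Id(\knl;\rnl_{\delta,\epsilon})\le\Id(\knl;R^*_\delta q_\epsilon)\), after which Proposition~\ref{prop:rstar2} closes the loop and \(\epsilon\searrow0\) is taken without any diagonalization.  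You instead build the recovery sequence explicitly as \(R^*_\delta q_j\) plus a corrector supplied by the \(\delta\)-uniform right inverse of \(\diver\) from Proposition~\ref{prop:diver0}(4), land in \(\Qd(f)\) exactly, and then diagonalize in \((j,\delta)\).  Both arguments rest on the same underlying \(\delta\)-uniformity (the paper's Theorem~\ref{thm:mixed_exist} is deduced from the LBB condition, which in turn comes from the same Proposition~\ref{prop:diver0}(4) that you invoke), so neither uses fundamentally stronger hypotheses.  What the paper's choice buys is that the ``in particular'' claim and the subsequent convergence Theorem~\ref{thm:primal_summary1} are phrased directly for the optimal nonlocal fluxes, and the diagonal extraction is avoided; what your construction buys is a fully explicit recovery sequence, which makes the role of the uniform right inverse more transparent and does not invoke existence/uniqueness for the mixed problem at all.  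Your handling of the ``in particular'' part, including the remark that \(\kappa^*\) must be extended off \(\O\) before \(\knl\) is well defined on \(\Od\times\Od\), is also correct and slightly more careful on that point than the paper's one-line deduction.
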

\begin{proof}
Let us fix an arbitrary \(\epsilon >0\).
Since \(C^\infty_c(\Rn;\Rn)\) is dense in \(\Q\) (cf.~\cite[Theorem 2.4]{girault_raviart}),
we can select \(q_{\epsilon} \in C^\infty_c(\Rn;\Rn)\) such that
\(\|q-q_\epsilon\|_{\Q}<\epsilon\). In particular,
\(f_\epsilon = \Div q_\epsilon\) satisfies the estimate \(\|f_\epsilon-f\|_{L^2(\O)}<\epsilon\),
and we have the inequality
\begin{equation}\label{eq:idloc_bound}\begin{aligned}
\left|
\Idloc(\kappa;q_\epsilon)-\Idloc(\kappa;q)
\right|
&=
\left|
\frac{1}{2}
\int_{\O} \kappa^{-1}(x)(q_\epsilon(x)-q(x))\cdot(q_\epsilon(x)+q(x))\dx
\right|
\\&\leq
\frac{1}{2}\|\kappa^{-1}\|_{L^\infty(\O)}\|q_\epsilon-q\|_{L^2(\O;\Rn)}
[\|q_\epsilon\|_{L^2(\O;\Rn)}+\|q\|_{L^2(\O;\Rn)}]
\\&\leq \frac{1}{2}\underline{\kappa}^{-1}\epsilon [2\|q\|_{L^2(\O;\Rn)}+\epsilon]=\epsilon\tilde{c},
\end{aligned}\end{equation}
where we have included in \(\tilde{c}>0\) the terms which remain bounded as \(\epsilon\searrow 0\).
Let us put \(\qnl_{\delta,\epsilon} = R_{\delta}^*q_{\epsilon}\),
and \(f_{\delta,\epsilon} = \diver \qnl_{\delta,\epsilon}\).
According to Proposition~\ref{prop:rstar1}, we can select \(\hat{\delta}>0\),
such that for all \(\delta \in (0,\hat{\delta})\) we have the bound
\[\|f_{\delta,\epsilon} - f\|_{L^2(\O)}
\leq \|\diver \qnl_{\delta,\epsilon} - f_{\epsilon}\|_{L^2(\O)}
+ \|f_{\epsilon} - f\|_{L^2(\O)} 
< \|\diver R_{\delta}^*q_{\epsilon} - \Div q_{\epsilon}\|_{L^2(\O)} + \epsilon < 2\epsilon.\]
Let \(\pnl_{\delta}\in \Qd(f)\) and \(\rnl_{\delta,\epsilon} \in \Qd(f_{\delta,\epsilon})\) be the solutions to~\eqref{eq:nlkelvin}
corresponding to the two different volumetric heat sources.
In view of the stability estimate, Theorem~\ref{thm:mixed_exist}, we have
the bounds \(\|\pnl_{\delta} - \rnl_{\delta,\epsilon}\|_{\Qd}
\leq 2\epsilon C\),
\(\|\pnl_{\delta}\|_{\Qd}\leq C\|f\|_{L^2(\O)}\), and
\(\|\rnl_{\delta,\epsilon}\|_{\Qd}\leq C\|f_{\delta,\epsilon}\|_{L^2(\O)}
\leq C(\|f\|_{L^2(\O)}+2\epsilon)\), with constant \(C>0\) independent from \(\delta\), \(\epsilon\), etc., as described in Theorem~\ref{thm:mixed_exist}.
Therefore, similarly to~\eqref{eq:idloc_bound} we have the following estimate:
\[\begin{aligned}
\left|\Id(\knl;\rnl_{\delta,\epsilon})-\Id(\knl;\pnl_{\delta})\right|
&=
\left|
\frac{1}{2}
\int_{\Od}\int_{\Od} \knl^{-1}(x,x')[\rnl_{\delta,\epsilon}(x,x')-\pnl_{\delta}(x,x')]
[\rnl_{\delta,\epsilon}(x,x')+\pnl_{\delta}(x,x')]\dx\dx'
\right|
\\&\leq
\frac{1}{2}\|\knl^{-1}\|_{L^\infty(\Od\times\Od)}\|\rnl_{\delta,\epsilon}-\pnl_{\delta}\|_{L^2(\Od\times\Od)}
[\|\rnl_{\delta,\epsilon}\|_{L^2(\Od\times\Od)}+\|\pnl_{\delta}\|_{L^2(\Od\times\Od)}]
\\&\leq \underline{\kappa}^{-1}\epsilon C^2[2 \|f\|_{L^2(\O)}+2\epsilon]
= \epsilon\tilde{C},
\end{aligned}\]
where \(\tilde{C}>0\) remains bounded as \(\delta\searrow 0\) and \(\epsilon \searrow 0\).
Consequently, we can write
\[
\limsup_{\delta \searrow 0}\Id(\knl;\pnl_{\delta}) \leq
\limsup_{\delta \searrow 0}
\Id(\knl;\rnl_{\delta,\epsilon}) + \epsilon\tilde{C}
\leq
\limsup_{\delta \searrow 0}
\Id(\knl;\qnl_{\delta,\epsilon}) + \epsilon\tilde{C}
=\Idloc(\kappa;q_\epsilon) +\epsilon \tilde{C}
\leq
\Idloc(\kappa;q) + \epsilon(\tilde{c}+\tilde{C}), 
\]
where we have utilized the optimality of \(\rnl_{\delta,\epsilon}\) as well
as Proposition~\ref{prop:rstar2} and the estimate~\eqref{eq:idloc_bound}.
Since \(\epsilon>0\) is arbitrary, the proof is concluded.
\end{proof}

Now, as a direct consequence of Propositions \ref{thm:nonlocal_approximation_1sided} and \ref{thm:nonlocal_approximation_2sided} we obtain the following nonlocal-to-local approximation result. 

\begin{theorem}\label{thm:primal_summary1}
    Under the previous assumptions, 
    \[d^*=\lim_{\delta\searrow 0} d^*_{\delta},\]
    and consequently 
      \begin{equation}\label{eq:liminf000}
      p^*
      =
      \lim_{\delta\searrow 0}
      p^*_{\delta},
      \end{equation}
      that is, we have convergence of the optimal values of~\eqref{eq:compliance_nl_primal} towards that of~\eqref{eq:compliance_local_primal}
      when the nonlocal interaction horizon converges to \(0\).
      Additionally, let \(\{(\kappa_\delta,\qnl_\delta)\}\) be a sequence of optimal solutions to~\eqref{eq:compliance_nl_mixed}.
      Then, the sequence \(\{(\kappa_\delta,R_\delta \qnl_\delta)\}\) is sequentially compact, with respect to weak\(^*\) topology of \(L^\infty(\O)\)
      and  weak topology of \(L^2(\O;\Rn)\). 
      Each limit point of this sequence is an optimal solution to~\eqref{eq:compliance_local_dual}.
      
      In fact, the sequence of reduced objective functions \(i_{\delta}(\kappa) = \inf_{\qnl_\delta\in \Qd(f)}\Id(\knl;\qnl_\delta)\)
      \(\Gamma\)-converges, as \(\delta\searrow 0\), towards \(i(\kappa) =  \inf_{q\in \Q(f)} \Idloc(\kappa;q)\) on the set
      \(\kadm_{\bar{\delta}}\subset L^\infty(\O_{\bar{\delta}})\), where \(\bar{\delta}\) is a fixed positive number and
      \(L^\infty(\O_{\bar{\delta}})\) is considered with its weak\(^*\) topology.
\end{theorem}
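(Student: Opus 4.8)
The plan is to deduce the statement from the one-sided approximation results of the previous two sections by means of routine weak-compactness and lower-semicontinuity arguments; the analytically delicate estimates are all already isolated in Propositions~\ref{prop:claim0}--\ref{thm:nonlocal_approximation_2sided}, so the proof is mostly bookkeeping. I would begin with the convergence of optimal values: Propositions~\ref{thm:nonlocal_approximation_1sided} and~\ref{thm:nonlocal_approximation_2sided} already yield $\limsup_{\delta\searrow 0}d^*_\delta\leq d^*\leq\liminf_{\delta\searrow 0}d^*_\delta$, hence $d^*_\delta\to d^*$. For the primal values I would recall from the paragraph preceding~\eqref{eq:compliance_nl_mixed} that the optimal value of the nonlocal Kelvin problem~\eqref{eq:nlkelvin} equals $-\min_{u\in\Udz}\Ip(\knl;u)$, so minimizing over $\kappa\in\kadmd$ gives $d^*_\delta=-p^*_\delta$; together with the analogous local identity $d^*=-p^*$ from Section~\ref{sec:local} this gives $p^*_\delta=-d^*_\delta\to-d^*=p^*$, i.e.~\eqref{eq:liminf000}.

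Next I would treat the compactness and optimality of limit points. Let $(\kappa_\delta,\qnl_\delta)$ be optimal for~\eqref{eq:compliance_nl_mixed}, antisymmetric by Remark~\ref{rem:asym}. Theorem~\ref{thm:mixed_exist} bounds $\|\qnl_\delta\|_{\Qd}$ uniformly in $\delta\in(0,\bar{\delta})$, Proposition~\ref{prop:claim0} then bounds $\{R_\delta\qnl_\delta\}$ uniformly in $L^2(\O;\Rn)$, and $\kadm$ is weak\(^*\) sequentially compact in $L^\infty(\O)$; this gives the claimed sequential compactness. Along any subsequence $\delta_k\searrow 0$ with $\kappa_{\delta_k}\wsto\kappa\in\kadm$ and $R_{\delta_k}\qnl_{\delta_k}\wto q$ weakly in $L^2(\O;\Rn)$, Proposition~\ref{prop:claim1_new} gives $q\in\Q(f)$. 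Since the integrand $(\kappa,q)\mapsto\kappa^{-1}|q|^2$ is jointly convex on $\R_+\times\Rn$, and weak\(^*\)-$L^\infty$ convergence on the bounded set $\O$ entails weak-$L^p$ convergence for every finite $p$, the Mazur/Fatou argument from the proof of Proposition~\ref{prop:dual_loc_exist} applies verbatim to give $\Idloc(\kappa;q)\leq\liminf_k\Idloc(\kappa_{\delta_k};R_{\delta_k}\qnl_{\delta_k})$. Chaining this with Proposition~\ref{prop:claim0} (where, in the harmonic case, $\knl_{\delta_k}^{-1}(x,x')=[\kappa_{\delta_k}^{-1}(x)+\kappa_{\delta_k}^{-1}(x')]/2$) and the optimality of $\qnl_{\delta_k}$ gives $\Idloc(\kappa;q)\leq\liminf_k\Id(\knl_{\delta_k};\qnl_{\delta_k})=\liminf_k d^*_{\delta_k}=d^*$; since $(\kappa,q)$ is admissible for~\eqref{eq:compliance_local_dual}, it is optimal.

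For the $\Gamma$-convergence I would first observe that $\kadm_{\bar{\delta}}$ is bounded in $L^\infty(\O_{\bar{\delta}})$, so its weak\(^*\) topology is metrizable and a sequential formulation of $\Gamma$-convergence suffices; moreover $i_\delta$ depends only on $\kappa|_{\Od}\in\kadmd$ and $\min_{\kappa\in\kadm_{\bar{\delta}}}i_\delta(\kappa)=d^*_\delta$, so this refinement is consistent with the above. For the $\Gamma$-$\limsup$ inequality I would take the constant recovery sequence $\kappa_\delta\equiv\kappa$: given $\epsilon>0$, pick $q\in\Q(f)$ with $\Idloc(\kappa;q)\leq i(\kappa)+\epsilon$, apply Proposition~\ref{thm:nonlocal_approximation_2sided} to obtain $\pnl_\delta\in\Qd(f)$ with $\limsup_{\delta\searrow 0}\Id(\knl;\pnl_\delta)\leq\Idloc(\kappa;q)$, whence $\limsup_{\delta\searrow 0}i_\delta(\kappa)\leq i(\kappa)+\epsilon$; letting $\epsilon\searrow 0$ finishes this half. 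For the $\Gamma$-$\liminf$ inequality, given $\kappa_\delta\wsto\kappa$ in $L^\infty(\O_{\bar{\delta}})$ I would pass to a subsequence $\delta_k$ realizing the (without loss of generality finite) $\liminf$, let $\qnl_{\delta_k}$ be the antisymmetric minimizer defining $i_{\delta_k}(\kappa_{\delta_k})$, bound $\|\qnl_{\delta_k}\|_{\Q_{\delta_k}}$ uniformly via Theorem~\ref{thm:mixed_exist}, and then, just as in the previous paragraph, pass to a further subsequence with $R_{\delta_k}\qnl_{\delta_k}\wto\hat{q}\in\Q(f)$ and estimate
\[
i(\kappa)\leq\Idloc(\kappa;\hat{q})\leq\liminf_k\Idloc(\kappa_{\delta_k};R_{\delta_k}\qnl_{\delta_k})\leq\liminf_k\Id(\knl_{\delta_k};\qnl_{\delta_k})=\liminf_k i_{\delta_k}(\kappa_{\delta_k}).
\]

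The main obstacle, modest as it is, is the joint weak lower semicontinuity of $(\kappa,q)\mapsto\Idloc(\kappa;q)$ under the mismatched topologies (weak\(^*\)-$L^\infty$ in $\kappa$, weak-$L^2$ in $q$); this is settled by the joint convexity of the perspective integrand $\kappa^{-1}|q|^2$ together with the Mazur-plus-Fatou argument already carried out in the proof of Proposition~\ref{prop:dual_loc_exist}. Apart from that, the only thing to watch is that the restriction maps $L^\infty(\O_{\bar{\delta}})\to L^\infty(\Od)\to L^\infty(\O)$ (and the corresponding ones for $L^2$) are used consistently, since the genuinely hard analysis --- the flux-recovery bounds, the closure property $q\in\Q(f)$, the energy comparison $\Idloc(\kappa;R_\delta\qnl)\leq\Id(\knl;\qnl)$, and the density/Taylor construction of recovery sequences --- is already packaged in Propositions~\ref{prop:claim0}--\ref{thm:nonlocal_approximation_2sided}.
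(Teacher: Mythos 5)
Your argument is correct and follows the approach the paper sketches: the paper's own "proof" is the single sentence asserting the theorem is a direct consequence of Propositions~\ref{thm:nonlocal_approximation_1sided} and~\ref{thm:nonlocal_approximation_2sided}, and what you supply is precisely the intended fleshing-out --- sandwiching $d^*_\delta$ via the two one-sided estimates, converting to $p^*$ via the duality $d^*_\delta=-p^*_\delta$, extracting weak/weak$^*$ limit points from the uniform bounds of Theorem~\ref{thm:mixed_exist} and Proposition~\ref{prop:claim0}, identifying them in $\Q(f)$ via Proposition~\ref{prop:claim1_new}, and closing the estimates with the Mazur--Fatou lower semicontinuity of the perspective integrand and Proposition~\ref{prop:claim0} (for the $\Gamma$-$\liminf$) together with the constant recovery sequence plus Proposition~\ref{thm:nonlocal_approximation_2sided} (for the $\Gamma$-$\limsup$). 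Your bookkeeping on the restriction maps $L^\infty(\O_{\bar\delta})\to L^\infty(\Od)\to L^\infty(\O)$ and the metrizability of the weak$^*$ topology on the bounded set $\kadm_{\bar\delta}$ is also the right level of care, so nothing is missing.
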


We would like to emphasize that dual formulations of nonlocal problems do not appear very often in the literature, and from this perspective this proof has its own merit. In this case, nonlocal-to-local convergence is obtained in an straightforward way primarily utilizing the stability of the dual variational principle (Theorem~\ref{thm:mixed_exist}). An alternative, direct proof, is also possible. The estimation of \(\limsup_{\delta\searrow 0} p^*_{\delta}\) may be obtained following the lines of~\cite[Proposition~4.4]{evgrafov2019non}  (this reference deals with geometric averaging of conductivities, it is however clear from the proof that this is not a limiting factor). The estimation of \(\liminf_{\delta\searrow 0} p^*_{\delta}\) may be shown  following the ideas of~\cite[Proposition~4.1]{evgrafov2019non} and utilizing the so-called ``generalized Ponce's inequality'', see~\cite{munoz2021generalized,JulioMunoz2021CtGP}.

\bibliographystyle{alpha}
\bibliography{dual_ponce}

\end{document}